\renewcommand{\L}{\mathbb L}
\newcommand{\ot}{\otimes}
\newcommand{\Z}{\mathbb{Z}} 
\newcommand{\Q}{\mathbb{Q}}
\newcommand{\R}{\mathbb{R}}
\newcommand{\V}{\mathcal{V} }
\newcommand{\W}{\mathcal{W}}
\newcommand{\HH}{\mathcal{H}}
\newcommand{\X}{\mathcal{X}}
\newcommand{\LVW}{\L(V\oplus W)}
\DeclareMathOperator{\Aut}{Aut}
\DeclareMathOperator{\map}{map}
\DeclareMathOperator{\aut}{aut}
\DeclareMathOperator{\Diff}{Diff}
\DeclareMathOperator{\Homeo}{Homeo}
\DeclareMathOperator{\id}{id}
\DeclareMathOperator{\Der}{Der}
\DeclareMathOperator{\MC}{MC}
\DeclareMathOperator{\FI}{\mathsf{FI}}
\DeclareMathOperator{\Emb}{Emb}
\DeclareMathOperator{\weight}{weight}
\DeclareMathOperator{\stabdeg}{stab-deg}
\theoremstyle{definition}
\newtheorem{thm}{Theorem}[section]
\newtheorem{cor}[thm]{Corollary}
\newtheorem{prop}[thm]{Proposition}
\newtheorem{lemma}[thm]{Lemma}
\newtheorem{dfn}[thm]{Definition}
\newtheorem{rmk}[thm]{Remark}
\newtheorem*{thmmm}{Theorem}
\newcommand{\Addresses}{{
  \bigskip
  \footnotesize

\textsc{Institut de mathématiques de Jussieu - Paris Rive Gauche, F-75013 Paris, France}\par\nopagebreak
  \textit{E-mail address:} \texttt{erikjlindell@gmail.com}\par\nopagebreak

  \vspace{1.5em}
  
\textsc{Department of Mathematics, Stockholm University, SE-106 91 Stockholm, Sweden}\par\nopagebreak
 
  \textit{E-mail address:} \texttt{basharsaleh1@gmail.com}

}}
\begin{document}
\title{Representation Stability for Homotopy Automorphisms} 
\author{Erik Lindell and Bashar Saleh} 
\date{}
\maketitle

\begin{abstract}
    We consider in parallel pointed homotopy automorphisms of iterated wedge sums of finite CW-complexes and boundary relative homotopy automorphisms of iterated connected sums of manifolds minus a disk. Under certain conditions on the spaces and manifolds, we prove that the rational homotopy groups of these homotopy automorphisms form finitely generated $\FI$-modules, and thus satisfy representation stability for symmetric groups, in the sense of Church and Farb. We also calculate explicit bounds on the weights and stability degrees of these $\FI$-modules.
\end{abstract}

\setcounter{tocdepth}{1}

\parskip=2pt  

\tableofcontents

\parskip=10pt

\section{Introduction} Pointed homotopy automorphisms of iterated wedge sums of spaces and boundary relative homotopy automorphisms of iterated connected sums of manifolds minus a disk, come with  stabilization maps that  yield questions of whether the homology groups or the homotopy groups of these homotopy automorphisms stabilize in any sense. Previously, Berglund and Madsen \cite{BM14} have proven rational homological stability for homotopy automorphisms of iterated connected sums of higher dimensional tori $S^n\times S^n$, for $n\ge 3$,  and these results were later expanded by Grey \cite{grey} and Stoll \cite{stoll}, for homotopy automorphisms of iterated connected sums of certain manifolds of the form $S^n\times S^m$, for $n,m\ge 3$. 

In this paper we instead study the rational \textit{homotopy groups} of the homotopy automorphisms in question, which we consider as based spaces with the identity map as the base point. These homotopy groups do not stabilize in the traditional sense. Instead, we show that they satisfy a different kind of stability, known as \textit{representation stability}. In the two cases we study in this paper, we consider sequences of rational homotopy groups, which in step $n$ are representations of the symmetric group $\Sigma_n$. For such representations, there is a consistent way to name the irreducible representations for arbitrary $n$, and representations stability essentially means that as $n$ tends to infinity, the decomposition into irreducible representations eventually becomes constant. 

Representation stability was introduced by Church and Farb \cite{CF13} and later further developed by Church, Ellenberg and Farb \cite{CEF15}, who showed that for representations of symmetric groups, this notion can be encoded by so called $\FI$-modules, which are functors from the category of finite sets and injections to the category of vector spaces. The stable range of representation stability corresponds to \textit{stability degree} and \textit{weight} of the corresponding $\FI$-module.

We review $\FI$-modules and representation stability in more detail in Section \ref{sectionFImodules}. The first main result of this paper is the following:

\begin{restatable}{thmx}{theoremone}\label{theorem1}
 Let $(X,*)$ be a pointed, simply connected space with the homotopy type of a finite CW-complex and let $X_S:=\bigvee^S X$, for any finite set $S$. For each $k\ge 1$, the functor
$$S\mapsto\pi_{k}^{\mathbb{Q}}(\aut_*(X_S)) $$
is an $\FI$-module. If $H_n(X,\Q)=0$ for $n\ge d$, this $\FI$-module is of weight $\le k + d-1$ and stability degree $\le k+d$.\hfill $\diamond$
\end{restatable}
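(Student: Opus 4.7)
The plan is to pass through rational homotopy theory. Pick a minimal (free) dg Lie model $L = (\mathbb{L}(V),d)$ of $X$ in the sense of Quillen--Tanr\'e; since $X$ is simply connected of finite type, $V$ is concentrated in positive degrees and finite-dimensional in each degree. Free products of dg Lie algebras model wedge sums of pointed spaces, so $L_S := \amalg_{s \in S} L$, which as a dg Lie algebra equals $(\mathbb{L}(V\otimes\mathbb{Q}[S]),d)$, is a Lie model of $X_S$. This assignment is functorial in injections $\iota\colon S\hookrightarrow T$: each such injection induces the inclusion-of-wedge-summand map $X_S\hookrightarrow X_T$, modeled by the inclusion of free-product factors $L_S\hookrightarrow L_T$.

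By the classical identification (Sullivan, Tanr\'e, Block--Lazarev), for $k\geq 1$ one has $\pi_k^{\mathbb{Q}}(\aut_*(X_S)) \cong H_k(\Der(L_S))$, where $\Der(L_S)$ is the dg Lie algebra of derivations equipped with differential $[d,-]$. The stabilization map on $\aut_*$ induced by inclusion of wedge summands corresponds, on Lie models, to extending a derivation of $L_S$ by zero on the complementary free-product factor of $L_T$. Hence $S\mapsto \Der(L_S)$ is an $\FI$-chain complex with $H_k(\Der(L_\bullet)) \cong \pi_k^{\mathbb{Q}}(\aut_*(X_\bullet))$ as $\FI$-modules. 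To show $\Der(L_S)$ is termwise a finitely generated $\FI$-module, use that $\Der(\mathbb{L}(V\otimes\mathbb{Q}[S])) = \operatorname{Hom}(V\otimes\mathbb{Q}[S], L_S)$ together with the canonical identification $\mathbb{Q}[S]^{\vee}\cong \mathbb{Q}[S]$ to obtain, in each fixed internal degree, an $\FI$-isomorphism $\Der(L_S)\cong V^*\otimes \mathbb{Q}[S]\otimes L_S$, where the right-hand side carries the tensor-product $\FI$-structure coming from $M(1)=\mathbb{Q}[S]$ and from $L_S$. The bracket-length-$n$ piece of $L_S$ is a $\Sigma_n$-coinvariant of $\operatorname{Lie}(n)\otimes V^{\otimes n}\otimes \mathbb{Q}[S]^{\otimes n}$; since $\mathbb{Q}[S]^{\otimes n}$ decomposes as a finite sum of free $\FI$-modules $M(k)$ (indexed by set partitions of $[n]$) it is finitely generated, and as $V$ is positively graded only finitely many bracket lengths contribute in each fixed internal degree, so $\Der(L_S)$ is term by term a finitely generated $\FI$-module.

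One then concludes by invoking the Noetherian property of $\FI$-modules over $\mathbb{Q}$ (Church--Ellenberg--Farb): subquotients of finitely generated $\FI$-modules remain finitely generated, so $H_k(\Der(L_S))\cong \pi_k^{\mathbb{Q}}(\aut_*(X_S))$ is finitely generated. The main difficulty will be the bookkeeping identifying the $\FI$-structure on $\Der$: the extension-by-zero construction naively appears contravariant in $\mathbb{Q}[S]^\vee$, and one must verify that under $\mathbb{Q}[S]^\vee\cong\mathbb{Q}[S]$ it becomes the covariant tensor-product $\FI$-structure on $V^*\otimes \mathbb{Q}[S]\otimes L_S$, and that the Sullivan--Tanr\'e identification intertwines these algebraic structure maps with the geometrically-defined stabilization maps on $\aut_*$.
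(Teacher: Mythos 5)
Your proposal follows essentially the same route as the paper: model $X_S$ by the free product $\L(H^{\oplus S})$ of a quasi-free Lie model of $X$, identify $\pi_k^{\Q}(\aut_*(X_S))$ with the degree-$k$ homology of the derivation Lie algebra, decompose that complex degreewise as $\mathcal H^*\otimes\L\mathcal H$, deduce finite type in each internal degree via the Schur-functor argument on $\mathcal{L}ie(n)\otimes_{\Sigma_n}(H^{\oplus S})^{\otimes n}$, and conclude by the noetherian property. Do note that the step you defer as ``bookkeeping'' --- that the Tanr\'e-type identification intertwines extension-by-zero of derivations with the geometric stabilization maps and the $\Sigma_n$-actions, so that one genuinely gets an isomorphism of $\FI$-modules rather than just a levelwise isomorphism --- is where most of the paper's work actually lies: it is established in Theorem~\ref{thm:model-for-stab-action-one} by replacing $\aut_{*,\circ}(X_S^{\Q})$ with the exponential group $\exp_\bullet(\Der^+(\L(H^{\oplus S})))$ acting on $\MC_\bullet(\L(H^{\oplus S}))$ via $\Theta.x=\sum_{i\ge 0}\Theta^i(x)/i!$, from which both compatibilities can be read off explicitly.
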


For the analogous theorem for connected sums, we need the notion of a boundary relative homotopy automorphism of a manifold $N$ (with boundary). A boundary relative homotopy automorphism of $N$ is a homotopy automorphism of $N$ that preserves the boundary $\partial:=\partial N$ pointwise. The boundary relative homotopy automorphisms of $N$ form a topological monoid, with respect to composition, which we will denote by $\aut_\partial(N)$. 

Let $M=M^d$ be a closed, oriented, $d$-dimensional manifold. For any finite set $S$, we let $M_S$ denote the $S$-fold connected sum of $M$ with itself, with an open $d$-disk removed; $M_S = \#^S M\smallsetminus \mathring D^d$. For $\mathbf n = \{1,2,\dots,n\}$, we denote $M_{\mathbf n}$ simply by $M_n$. A homotopy automorphism of $M_n$ does not extend to
a homotopy automorphism of $M_{n+1}$ in any canonical way in general. However, boundary relative
homotopy automorphisms of $M_n$ extend by identity to a boundary relative homotopy automorphism of $M_{n+1}$.
In particular, there is a stabilization map
$$
s_n\colon \aut_\partial(M_n)\to \aut_\partial(M_{n+1}).
$$

By picking some base point in the boundary of $M_1$, there is a deformation retract $M_S\xrightarrow\simeq \bigvee^S M_1$ (see e.g. \cite[§ 3.1.2]{felixamg}), where the wedge sum is taken along this base point. It follows by Theorem \ref{theorem1}  that there is an FI-module given on objects by $S\mapsto\pi_k(\aut_*(M_S)) \cong \pi_k(\aut_*( \bigvee^S M_1))$. For any finite set $S$ we have an obvious inclusion map  $\aut_\partial(M_S)\hookrightarrow\aut_*(M_S)$, so we may ask whether we can find an $\FI$-module given by $S\mapsto\pi_k(\aut_\partial(M_S))$ that make these maps into a morphism of $\FI$-modules, i.e.\ a natural transformation of functors. We will refer to this a ``lifting'' the $\FI$-module structure. In our second main theorem, we address this problem:

\begin{restatable}{thmx}{theoremtwo}\label{theorem2}
Let $M = M^d$ be a closed, simply connected, oriented $d$-dimensional manifold. With $M_S$ defined as above, we have the following:
\begin{itemize}
    \item[(a)]  For each $k\ge 1$, the $\FI$-module
$$
S\mapsto \pi_k\left(\aut_*\left({\bigvee}^S M_1\right)\right) \cong  \pi_k(\aut_*(M_S)) 
$$
lifts to an $\FI$-module 
$$
S\mapsto  \pi_k(\aut_\partial(M_S)) 
$$
sending the standard inclusion $\mathbf n\to \mathbf{n+1}$ to the map $\pi_{k}(\aut_\partial(M_n))\to \pi_{k}(\aut_\partial(M_{n+1}))$
induced by the stabilization map $s_n$.
\item[(b)] The rationalization of this $\FI$-module is of weight $\le k+d-2$ and stability degree $\le k+d-1$.\hfill$\diamond$ 
\end{itemize} 

\end{restatable}

\begin{rmk}
Theorem \ref{theorem1} and Theorem \ref{theorem2} are somewhat analogous to those for unordered configuration spaces of manifolds. Rational homological stability for unordered configuration spaces of arbitrary connected manifolds was proven by Church \cite{Church12}, following integral results for open\footnote{Integral homological stability is known not to hold for closed manifolds. A simple counterexample is given already by the $2$-sphere $S^2$, where $H_1(B_n(S^2),\Z)\cong \Z/(2n-2)\Z$ (see for example \cite[Theorem 1.11]{Birman75}).} manifolds by Arnol'd \cite{Arnold69}, MacDuff \cite{McDuff75} and Segal \cite{Segal79}. It was later proven by Kupers and Miller \cite{KupersMiller18} that the rational \textit{homotopy groups} of unordered configuration spaces on connected, simply connected manifolds of dimension at least 3, satisfy representation stability. 

Homotopy automorphisms of iterated wedge sums of \textit{spheres} has been studied by Miller, Patzt and Petersen \cite{miller2019representation}. Using representation stability, they prove that for $d\ge 2$, the sequence $\{B\aut\left(\bigvee_{i=1}^n S^d\right)\}_{n\ge 1}$ satisfies homological stability with $\Z\left[\frac{1}{2}\right]$-coefficients, which proves homological stability with the same coefficients for $\{B\mathrm{GL}_n(\mathbb{S})\}_{n\ge 1}$, where $\mathbb{S}$ is the sphere spectrum. These representation stability results are neither weaker nor stronger than Theorem \ref{theorem1}, since on one hand they work with $\Z\left[\frac{1}{2}\right]$-coefficients and on the other hand we work with wedge sums of more general CW-complexes than spheres.

For a simply connected  $d$-dimensional manifold $M$, with boundary $\partial M\cong S^{d-1}$, the rational homotopy theory of $\aut_\partial(M)$ has been thoroughly studied by Berglund and Madsen \cite{BM14}, whose results we will use.\hfill$\diamond$
\end{rmk}

As a by-product of the techniques used for proving Theorem \ref{theorem2} (a)  we get the following:

\begin{thmmm}
Let $M$ be a closed, oriented, simply connected $d$-dimensional manifold such that the reduced homology of $M\smallsetminus\mathring D^d$ is non-trivial. Given a subspace $A\subseteq \partial M_n$, possibly empty, such that $A\subset M_n$ is a cofibration, then  the groups $\pi_0(\aut_A(M_n))$, $\pi_0(\Diff_A(M_n))$ and  $\pi_0(\Homeo_A(M_n))$ contain a subgroup isomorphic to $\Sigma_n$. 
\hfill$\diamond$
\end{thmmm}

\subsection{Structure of the paper.} In Section 2, we review the necessary background on $\FI$-modules that we will need. The reader familiar with $\FI$-modules may skip directly  to Section \ref{sec:FI-models}, where we introduce the notion of $\FI$-\textit{Lie models} of pointed $\FI$-spaces, which is of key importance for proving the main theorems. In Section 3 we review necessary rational homotopy theory for homotopy automorphisms needed for proving the main theorems.  In Section 4, we study homotopy automorphisms of wedge sums and prove Theorem \ref{theorem1}. In Section 5, we study homotopy automorphisms of connected sums and prove Theorem \ref{theorem2}.

\subsection{Acknowledgements} The authors would like to thank Dan Petersen and Alexander Berglund for many useful comments and suggestions. We would also like to thank the referees, whose careful reading and useful comments have improved the paper greatly from its first version.

Subsection \ref{subsec:integral-thm-b}, which treats integral homotopy theory for relative homotopy automorphisms, has developed greatly since the first preprint version of this paper, thanks to many other people. Our decision to consider the integral homotopy groups of the homotopy automorphisms of iterated connected sums is inspired by an answer by Ryan Budney to a question by the second named author at MathOverflow. The method used to prove Theorem \ref{theorem2} (a) was suggested by Manuel Krannich, who has also come with several other very helpful comments. In addition, he was in the committee for the PhD defense of the first author, where he, together with Fabian Hebestreit, pointed out several minor errors in the paper and came with some very helpful suggestions for improvements.

The second author was supported by the Knut and Alice Wallenberg Foundation through grant no. 2019.0521.

\section{Representation stability, FI-modules and Lie models of FI-spaces}\label{sec:fi} \label{sectionFImodules}

\subsection{Conventions.} Throughout the paper, we will use $R$ to denote a commutative ring, which we will assume to be Noetherian for convenience. We will mainly work over the field $\Q$, so unless otherwise specifically stated, all vector spaces are over $\Q$. We will use ``dg'' to abbreviate the term \textit{differential graded}. Throughout the paper, $\FI$ denotes the category of finite sets with injective maps as morphisms. 

If $S$ is a finite set, we will use $|S|$ to denote its cardinality, and we will write $\Sigma(S):=\Aut_{\FI}(S)$ for the symmetric group on $S$. If $S=\mathbf{n}:=\{1,2,\ldots,n\}$ we will simply write $\Sigma(S)=\Sigma_n$ for brevity.

Recall that the irreducible $\Q$-representations of $\Sigma_n$ are indexed by partitions of weight $n$, i.e.\ sequences of non-negative integers $\lambda=(\lambda_1\ge \lambda_2\ge \cdots\lambda_l\ge 0\ge \cdots)$ such that $|\lambda|=\lambda_1+\lambda_2+\cdots=n$. We will denote the corresponding $\Q$-representation by $V_\lambda$. For any $k\ge n+\lambda_1$, we also define the \textit{padded} partition $\lambda[k]:=(k-n\ge \lambda_1\ge\lambda_2\ge\cdots)$ and write $V(\lambda)_k:=V_{\lambda[k]}$.

\subsection{Representation stability} Before we introduce the language of $\FI$-modules, let us recall the original notion of representation stability, which is formulated in terms of consistent sequences of $\Sigma_n$-representations.

\begin{dfn}\label{dfn:consistentSequence}
Let $R$ be a commutative ring. A \textit{consistent sequence} of $\Sigma_n$-representations over $R$ is a sequence $\{V^n,\phi^n\}$ where $V^n$ is a $R[\Sigma_n]$-module and $\phi^n:V^n\to V^{n+1}$ is an $\Sigma_n$-equivariant map (where $V^{n+1}$ is considered a $R[\Sigma_n]$-module through the standard inclusion $\Sigma_n\hookrightarrow\Sigma_{n+1}$).
\end{dfn}

If $R=\Q$, we may define (uniform) \textit{representation stability} for such a sequence as follows:

\begin{dfn}
A consistent sequence of rational $\Sigma_n$-representations $\{V^n,\phi^n\}$ is said to be uniformly representation stable with stable range $n\ge N$ if it satisfies the following for all $n\ge N$:\begin{enumerate}
    \item the map $\phi^n$ is injective,
    \item the image of $\phi^n$ generates $V^{n+1}$ as a $\Sigma_{n+1}$-representation,
    \item for each partition $\lambda$ the multiplicity of $V(\lambda)_n$ in $V^n$ is independent of $n$.
\end{enumerate}
\end{dfn}

Next, we will introduce $\FI$-modules, and recall how representation stability is encoded in that language.

\subsection{$\FI$-modules}\label{Section-FI-modules} We first introduce the notion of an $\FI$-object in an arbitrary category.

\begin{dfn}
Let $\mathcal{C}$ be a category. A functor $\FI\to\mathcal{C}$ is called an $\FI$-object in $\mathcal{C}$.\hfill $\diamond$
\end{dfn}

Let us review the kinds of $\FI$-objects that will be of interest to us:\begin{itemize}
    \item An $\FI$-object in $\mathrm{(gr)}\mathrm{Mod}_R$, the category of ($\mathbb{Z}$-graded) $R$-modules, will be called a (graded) $\FI$-$R$-\textit{module}. An $\FI$-object in $\mathrm{dgMod}_R$, the category of differential graded $R$-modules, will be called a \textit{dg }$\FI$-$R$-module. For a dg $\FI$-$R$-module $\V $, we will write $H_*(\V)$ for the composition with the homology functor and refer to it as the homology of $\V $.
    \item An $\FI$-object in $\mathrm{dgLie}_R$, the category of dg Lie algebras, over $R$, will be called a dg $\FI$-$R$-Lie algebra.
    \item An $\FI$-object in $\mathrm{Top}_*$, the category of pointed topological spaces, will be called a \textit{pointed} $\FI$-\textit{space}. If $\mathcal{P}$ is a  property of pointed topological spaces, such as being simply connected, we will say that a pointed $\FI$-space $\mathcal{X}$ has property $\mathcal{P}$ if $\mathcal{X}(S)$ has property $\mathcal{P}$, for every finite set $S$. If $\mathcal{X}$ is a pointed $\FI$-space with $\pi_1(\X(S))$ being  abelian for every finite set $S$, composing with the (rational) homotopy groups functor $\pi_{*}$ (resp. $\pi_{*}^\Q$) naturally gives us a graded $\FI$-$\Z$-module (resp.  graded $\FI$-$\Q$-module). We will simply write $\pi_{*}(\mathcal{X})$ (resp. $\pi_{*}^\Q(\X)$) for this composite functor and refer to it as the (rational) homotopy groups of $\mathcal{X}$.
\end{itemize}

We will generally consider the first two examples for $R=\Q$ and $R=\Z$. If the ring is clear from context, or if the choice of $R$ is not important, we will generally drop it from the notation. 

Now let us recall some basics from the theory of $\FI$-modules. Since the category of (graded) $R$-modules is abelian, the category of (graded) $\FI$-$R$-modules inherits this structure, which means that there are natural notions of (graded) $\FI$-$R$-submodules as well as quotients, direct sums and tensor products of (graded) $\FI$-$R$-modules, all defined pointwise (cf. \cite[Remark 2.1.2]{CEF15}). 

\begin{rmk}
Note that any $\FI$-$R$-module $\mathcal V$ gives rise to a consistent sequence $\{V^n:=\mathcal V(\mathbf n), \phi^n:=\V (\mathbf{n}\hookrightarrow \mathbf{n+1})\}$ of $R[\Sigma_n]$-modules, where $\mathbf{n}\hookrightarrow \mathbf{n+1}$ is the standard inclusion.\hfill$\diamond$
\end{rmk}

\begin{rmk}
 Not every consistent sequence  arises from an FI-module (cf. \cite[Remark 3.3.1]{CEF15}). 
\strut\hfill$\diamond$
\end{rmk}

Sometimes it will be more convenient to work with consistent sequences than with $\FI$-modules. For this purpose, the following lemma is important:

\begin{lemma}[\text{cf. \cite[Remark 3.3.1]{CEF15}}]\label{lemma:consistentSeqFromFI-module}
A consistent sequence $\{V^n,\phi^n\}$ is induced by some $\FI$-module if and only if for every $\sigma\in \Sigma_{n+k}$ with $\sigma|_{\mathbf n}=\id$ acts trivially on 
$$\mathrm{im}\left(\phi^{n+k-1}\circ\cdots\phi^n\colon V^n\to V^{n+k}\right).$$
If two $\FI$-modules give rise to isomorphic consistent sequences, then the two $\FI$-modules are isomorphic.\hfill $\diamond$
\end{lemma}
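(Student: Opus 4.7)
The forward direction is essentially an observation of functoriality. Suppose $\mathcal V$ is an $\FI$-module and let $\iota\colon \mathbf n\hookrightarrow \mathbf{n+k}$ denote the standard inclusion, so that $\mathcal V(\iota)=\phi_{n+k-1}\circ\cdots\circ\phi_n$. For any $\sigma\in \Sigma_{n+k}$ with $\sigma|_{\mathbf n}=\id$, one has $\sigma\circ\iota=\iota$ as morphisms in $\FI$, hence $\mathcal V(\sigma)\circ \mathcal V(\iota)=\mathcal V(\iota)$, so $\sigma$ acts trivially on $\mathrm{im}(\mathcal V(\iota))$.

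For the converse, assume the hypothesis and construct an $\FI$-module from a consistent sequence $\{\mathcal V(\mathbf n),\phi_n\}$. It suffices to build a functor on the full skeleton of $\FI$ whose objects are the $\mathbf n$. Every injection $f\colon \mathbf n\hookrightarrow \mathbf m$ admits a (non-unique) factorization $f=\tilde\sigma\circ\iota$, where $\iota$ is the standard inclusion and $\tilde\sigma\in \Sigma_m$ is any permutation with $\tilde\sigma(i)=f(i)$ for $i\le n$. Define
$$\mathcal V(f):=\tilde\sigma_*\circ\phi_{m-1}\circ\cdots\circ\phi_n.$$
The key point is well-definedness: if $\tilde\sigma'$ is another such extension of $f$, then $\tilde\sigma^{-1}\tilde\sigma'$ fixes $\mathbf n$ pointwise, and by hypothesis it acts trivially on $\mathrm{im}(\phi_{m-1}\circ\cdots\circ\phi_n)$, giving $\tilde\sigma'_*\circ(\phi_{m-1}\cdots\phi_n)=\tilde\sigma_*\circ(\phi_{m-1}\cdots\phi_n)$. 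Functoriality $\mathcal V(g\circ f)=\mathcal V(g)\circ\mathcal V(f)$ then reduces to the two elementary cases where one factor is a permutation (handled by $\Sigma_n$-equivariance of the $\phi_n$) or a standard inclusion (handled by how $\phi_n$ stabilize). Finally, the definition extends from the skeleton to all of $\FI$ by transport of structure along any chosen bijection $S\xrightarrow{\sim}\mathbf{|S|}$, with different choices giving canonically isomorphic values.

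For the uniqueness clause, suppose $\mathcal V$ and $\mathcal W$ are $\FI$-modules whose underlying consistent sequences are isomorphic via $\Sigma_n$-equivariant maps $\psi_n\colon \mathcal V(\mathbf n)\to\mathcal W(\mathbf n)$ commuting with the $\phi_n$. Since every morphism of $\FI$ (between objects of the skeleton) decomposes as a standard inclusion followed by a permutation, the intertwining conditions assumed of $\psi_n$ force naturality of $\{\psi_n\}$ with respect to every such morphism, hence assemble the $\psi_n$ into a natural isomorphism $\mathcal V\cong \mathcal W$.

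\textbf{Expected main obstacle.} The single nontrivial point is the well-definedness of $\mathcal V(f)$ under different choices of the extension $\tilde\sigma$: this is exactly the content of the hypothesis, and once granted, functoriality and the uniqueness statement are formal consequences of the factorization of arbitrary injections as permutations composed with standard inclusions.
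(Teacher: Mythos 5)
Your proof is correct, and it is exactly the standard argument behind this statement: the paper itself gives no proof, citing \cite[Remark 3.3.1]{CEF15}, and your construction (factor an arbitrary injection as a permutation composed with a standard inclusion, use the hypothesis for well-definedness, and use $\Sigma_n$-equivariance of the $\phi_n$ for functoriality and for the uniqueness clause) is precisely what that remark records. Nothing is missing.
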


The main property of $\FI$-modules that will be of interest to us is \textit{finite generation}, since this is what encodes representation stability:

\begin{dfn}
Let $\mathbf{n}:=\{1,2,\ldots, n\}$. A (graded) $\FI$-$R$-module $\V$ is said to be \textit{finitely generated} if there exists a finite set $S\subset\bigsqcup_{n\ge 1} \V (\mathbf{n})$ such that there is no proper (graded) $\FI$-$R$-submodule $\mathcal{W}$ of $\V$ such that $S\subset\bigsqcup_{n\ge 1}\mathcal{W}(\mathbf{n})$.\hfill $\diamond$
\end{dfn}

Now we can describe how representation stability relates to $\FI$-modules:

\begin{thm}[\text{\cite[Theorem 1.13]{CEF15}}]
An $\FI$-$\Q$-module $\V$ is finitely generated if and only if the consistent sequence $\{V^n:=\V (\mathbf{n})\}$ is uniformly representation stable and each $V^n$ is finite dimensional. \hfill$\diamond$
\end{thm}

What makes working with the category of $\FI$-$R$-modules for any Noetherian ring $R$ particularly useful is that it is \textit{Noetherian}, i.e.\ an $\FI$-$R$-submodule of such a finitely generated $\FI$-$R$-module is itself finitely generated (see \cite[Theorem 1.3]{CEF15} and \cite[Theorem A]{CEFN14}). Finite generation is also preserved by tensor products and quotients. This means that to prove that an $\FI$-$R$-module is finitely generated, it suffices to show that it is a subquotient of a tensor product of some $\FI$-$R$-modules that are more obviously finitely generated.

Since we want to use rational homotopy theory to prove our results, we need to consider graded $\FI$-modules in our proofs. For this reason we will need the following definition:

\begin{dfn}
 If $\V$ is a graded $\FI$-$R$-module and $m\in\mathbb{Z}$, let $\V_m$ be the degree $m$ part of $\V$, i.e.\ the post-composition with the functor $\mathrm{grVect}_\Q\to\mathrm{Vect}_\Q$ given by sending a graded vector space to its degree $m$ part. If $\V_m=0$ for $m\le m'$ (resp. $m\ge m')$ we say that $\V $ is concentrated in degrees above (resp. below) $m'$. Such a graded $\FI$-module is called bounded from below (above).\hfill$\diamond$
\end{dfn}

\subsection{Weight and stability degree} For the rest of Section \ref{sec:fi}, we will assume that $R=\Q$. We have seen how finite generation of $\FI$-$\Q$-modules corresponds to representation stability of the corresponding consistent sequence of rational $\Sigma_n$-representations, but in order to make quantitative statements about stability ranges we need to introduce the \textit{weight} and \textit{stability degree} of such $\FI$-modules.

Recall that if $V$ is a $\Sigma_n$-representation, $(V)_{\Sigma_n}$ denotes the quotient of \textit{coinvariants} of $V$. For an $\FI$-module $\V $, this allows us to define a sequence $\{\phi_a(\V)^n\}$ of vector spaces and maps between them, for each $a\ge 0$, as follows: let $\phi_a(\V )^n:=\left(\V (\mathbf{a}\sqcup\mathbf{n})\right)_{\Sigma_n}$. Any inclusion $\iota:\mathbf{n}\hookrightarrow\mathbf{n+1}$ gives us an inclusion $\mathrm{id}\sqcup\iota:\mathbf{a}\sqcup\mathbf{n}\hookrightarrow\mathbf{a}\sqcup(\mathbf{n+1})$, inducing a map $\phi_a(\V )^n\to\phi_a(\V )^{n+1}$. Since we quotient by $\Sigma_{n+1}$, the choice of inclusion $\iota$ does not matter.

With this, we can define the \textit{stability degree} of an $\FI$-module as follows:

\begin{dfn}[\text{ \cite[Definition 3.1.3]{CEF15}}]
The \textit{injectivity degree} $\text{inj-deg}(\V)$ (\textit{surjectivity degree} $\text{surj-deg}(\mathcal{V})$) of an $\FI$-module $\V $ is the smallest $s\ge 0$ such that for all $a\ge 0$, the map $\phi_a(\V )^n\to\phi_a(\V )^{n+1}$, defined as above, is injective (surjective) for all $n\ge s$ (and if no such $s$ exists we set the degree to $\infty$). We define the \textit{stability degree} $\stabdeg(V)$  of $\mathcal{V}$ to be the maximum of the injectivity and surjectivity degrees.\hfill $\diamond$
\end{dfn}

Next, we define the \textit{weight} of an $\FI$-module.

\begin{dfn}
The weight of an $\FI$-module $\V $, which we denote by $\weight(\V )$, is the maximum weight $|\lambda|$ over all $V(\lambda)_n$ appearing in the $\Sigma_n$-representation $\V (\mathbf{n})$, if such a maximum exists. If no maximum exists we set $\weight(\V )=\infty$ and if the $\FI$-module is zero we set it to zero.\hfill $\diamond$
\end{dfn}

These definitions are relevant because of their relation to representation stability, which may now be stated as follows:

\begin{prop}[\text{\cite[Proposition 3.3.3]{CEF15}}]
Let $\V $ be an $\FI$-module. The consistent sequence $\{V^n,\phi^n\}$ determined by $\V $ is uniformly representation stable with stable range $n\ge\weight(\V )+\stabdeg(\V )$.\hfill $\diamond$
\end{prop}

\begin{rmk}
Note that in particular, this proposition implies that if an $\FI$-module has finite weight and stability degree, it is finitely generated. For this reason we will only be working with weight and stability degree going forward. It should be noted, however, that due to the Noetherian property of $\FI$-modules, it is possible to freely take submodules and quotients and preserve finite generation. This is not the case for stability degree, as we will see, making it much easier to prove finite generation than to obtain an explicit bound on stability degree. 
\end{rmk}

Let us recall some useful properties of weight and stability degree. First, the following is immediate from the definitions:

\begin{prop}
Let $\V^1$ and $\V^2$ be $\FI$-modules. Then $\weight(\V^1\oplus\V^2)\le\max(\weight(\V^1),\weight(\V^2))$ and $\stabdeg(\V^1\oplus\V^2)\le\max(\stabdeg(\V^1),\stabdeg(\V^2))$.
\end{prop}

Next, we will recall how weight and stability degree behaves under taking tensor products.

\begin{prop}\label{prop:weightstab-tensor}
Suppose that $\V^1,\V^2,\ldots,\V^k$ are $\FI$-modules with stability degrees $ \le r_1, r_2,\ldots, r_k$ and weights $\le s_1, s_2,\ldots,s_k$, respectively. Then 
$$\weight(\V^1\otimes\cdots\otimes \V^k)\le s_1+\cdots+s_k$$
and 
$$\stabdeg(\V^1\otimes\cdots\otimes\V^k)\le \max(r_1+s_1,\ldots,r_k+s_k,s_1+\cdots+s_k).$$
\end{prop}

\begin{proof}
The first part is \cite[Proposition 3.2.2]{CEF15}, while the second part is \cite[Proposition 2.23]{KupersMiller18}.
\end{proof}

We also need to know how stability degree and weight behaves when taking submodules and quotients.

\begin{prop}\label{prop:weightstab-subquotient}
Let $\V$ be an $\FI$-module, $\mathcal{W}$ an $\FI$-submodule of $\V$. Then $\weight(\mathcal{W})\le\weight(\V)$ and $\weight(\mathcal{V}/\mathcal{W})\le \weight(\V)$. If in addition $\V$ is such that $\V(S)$ is finite dimensional for every finite set $S$, we have the following: \begin{enumerate}
    \item $\text{inj-deg}(\mathcal{W})\le\text{inj-deg}(\mathcal{V})$,
    \item $\text{surj-deg}(\mathcal{V}/\mathcal{W})\le\text{surj-deg}\mathcal({V})$,
    \item if $\text{inj-deg}(\V)\le r$ and $\text{surj-deg}(\W)\le r$, then $\text{inj-deg}(\V/\mathcal{W})\le r$,
    \item if $\text{surj-deg}(\V)\le r$ and $\text{inj-deg}(\V/\W)\le r$, then $\text{surj-deg}(\mathcal{W})\le r$,
\end{enumerate} 
\end{prop}

\begin{proof}
The first part follows directly by the definition of weight.  (1) and (2) are \cite[Lemma 3.1.6]{CEF15}. 

To prove (3) and (4), note that for each $a\ge 0$, $\phi_a$ defines a functor from the category of $\FI$-modules to the category of sequences of vector spaces and linear maps and this functor is exact. Thus (3) and (4) follow from the following respective propositions from linear algebra: if $f:V\to W$ is a linear map of finite dimensional vector spaces, $V'\subseteq V$ and $W'\subseteq W$ are subspaces, $f':V'\to W'$ is a linear map such that $f'(v)=f(v)$ for all $v\in V'$ and $f/f':V/V'\to W/W'$ is the induced map between the quotients, then
\begin{enumerate}[start=3,label={ (\arabic*'):}]
    \item if $f$ is injective and $f'$ is surjective, then $f/f'$ is injective,
    \item if $f$ is surjective and $f/f'$ is injective, then $f'$ is surjective.
\end{enumerate}
Proving these are both simple exercises in linear algebra and therefore left to the reader.
\end{proof}

Note however that given \textit{only} the stability degree of an $\FI$-module we can in general not say anything about the stability degree of its $\FI$-submodules or quotients. However, if an $\FI$-module $\V$ is isomorphic to \textit{both} an $\FI$-submodule of an $\FI$-module and a quotient of an $\FI$-module, for both of which we have bounds on the stability degree, we can use the proposition above to determine a bound on $\stabdeg(\V)$. This will be the case for an important class of $\FI$-modules that we consider in Subsection \ref{subsec:schur}. Note that in particular, we get the following corollary:

\begin{cor}\label{cor:stabdeg-summand}
Suppose $\mathcal{W}$ is an $\FI$-module which is a direct summand of another $\FI$-module $\V$, i.e.\ that there exists a third $\FI$-module $\mathcal{U}$ such that $\V\cong\mathcal{W}\oplus \mathcal{U}$. Then $\stabdeg(\mathcal{W})\le\stabdeg(\V)$.
\end{cor}

Finally, we need a way to determine the weight and stability degree in each degree when taking the homology of a differential graded $\FI$-module. We will prove the following more general statement (cf. \cite[Proposition 2.19]{KupersMiller18}):

\begin{prop}\label{prop:weightstab-homology}
Let $\mathcal{U}\overset{f}{\to}\V\overset{g}{\to}\mathcal{W}$ be a sequence of $\FI$-modules and morphisms of $\FI$-modules, such that $\mathcal{U}(S),\mathcal{W}(S)$ and $\V(S)$ are finite-dimensional for all $S\in\FI$, and $g\circ f=0$. Then $\weight(\mathrm{ker}(g)/\mathrm{im}(f))\le \weight(\mathcal{W})$ and if all three $\FI$-modules have stability degree $\le r$, then $\stabdeg(\mathrm{ker}(g)/\mathrm{im}(f))\le r$. In particular, if $\V$ is a dg $\FI$-module such that $\V_{m}(S)$ is finite dimensional for each $m$ and finite set $S$, then $\weight(H_m(\V))\le \weight(\V_m)$ and if $\stabdeg(\V_i)\le r$ for $i\in \{m-1,m,m+1\}$, we have $\stabdeg(H_m(\V))\le r$.
\end{prop}

\begin{proof}
The first part follows directly from the first part of Proposition \ref{prop:weightstab-subquotient}. We prove the second part by showing that the homology has injectivity and surjectivity degree $\le r$. For injectivity degree, note that $\mathrm{ker}(g)$ has injectivity degree $\le r$, by Proposition \ref{prop:weightstab-subquotient}(1), since it is a $\FI$-submodule of $\mathcal{V}$. Furthermore, since the category of $\FI$-modules is abelian, we have $\mathrm{im}(f)\cong \mathcal{U}/\mathrm{ker}(f)$, which has surjectivity degree $\le r$, by Proposition \ref{prop:weightstab-subquotient}(2). Thus it follows from Proposition \ref{prop:weightstab-subquotient}(3) that $\mathrm{ker}(g)/\mathrm{im}(f)$ has injectivity degree $\le r$. 

For surjectivity degree, we argue similarly as follows: the injectivity degree of $\mathrm{im}(g)\le r$ by Proposition \ref{prop:weightstab-subquotient}(1), and since $\mathrm{im}(g)\cong\mathcal{V}/\mathrm{ker}(g)$, we thus get by Proposition \ref{prop:weightstab-subquotient}(4) that $\mathrm{ker}(g)$ has surjectivity degree $\le r$. Thus the quotient $\mathrm{ker}(g)/\mathrm{im}(f)$ does as well, by Proposition \ref{prop:weightstab-subquotient}(2).
\end{proof}

\subsection{$\FI^\#$-modules} Many $\FI$-modules appearing ``naturally'' actually have additional structure, which may be encoded using the notion of an $\FI^\#$-module. The category $\FI^\#$ has the same objects as $\FI$, but the morphisms $S\to T$ are given by a pair of subsets $A\subset S$ and $B\subset T$ and a bijection $A\to B$. We call these \textit{partial injections}. An $\FI^\#$-object in a category $\mathcal{C}$ is simply a functor $\FI^\#\to\mathcal{C}$. Since $\FI$ is a subcategory of $\FI^\#$, any $\FI^\#$-object has an underlying $\FI$-object, so all the notions defined in the previous sections can be defined for (graded) $\FI^\#$-modules, by simply considering the underlying (graded) $\FI$-module.

The reason we will consider $\FI^\#$-modules is because there is a natural way to define \textit{duals} in this category. Note that the category $\FI^\#$ is naturally isomorphic to its opposite category, simply by taking the inverse of the bijection (see the end of \cite[Remark 4.1.3]{CEF15}). This allows us to make the following definition:

\begin{dfn}
If $\V:\FI^\#\to\mathrm{Vect}_\Q$, we define the \textit{dual} $\FI^\#$-module $\V^*$ as the composite functor
\[\begin{tikzcd}
   \FI^\#\arrow[r,"\cong"]&(\FI^\#)^{\text{op}}\arrow[r,,"\V^{\text{op}}"]&\mathrm{Vect}_\Q^{\text{op}}\arrow[rr,"{\mathrm{Hom}_\Q(-,\Q)}"]&&\mathrm{Vect}_\Q.
\end{tikzcd}\]
\end{dfn}

\subsection{Schur functors}\label{subsec:schur} The graded $\FI$-modules that we will study will be constructed by composing \textit{Schur functors} with simpler graded $\FI$-modules, which is why they are of finite type. In this section we will review what we mean by Schur functors in this context and their properties when composed with graded $\FI$-modules.

If $\lambda$ is a partition of $k\ge 0$, we define the Schur functor $\mathbb{S}_\lambda:\mathrm{gr}\mathrm{Vect}_\mathbb{Q}\to \mathrm{gr}\mathrm{Vect}_\mathbb{Q}$ on objects by
$$V\mapsto S^\lambda\otimes_{\Sigma_k} V^{\otimes k},$$
considering $V^{\otimes k}$ with the standard $\Sigma_k$-action and considering $S^\lambda$ as a graded vector space concentrated in degree 0. Another definition, which gives an isomorphic functor, is that $\mathbb{S}_\lambda(V)$ is given by the composition of the $k$th tensor power functor with the action of a certain idempotent operator $c_\lambda\in\Q[\Sigma_k]$, known as a \textit{Young symmetrizer}, acting on $V^{\otimes k}$ (see \cite{fultonharris} for a definition). This characterizes $\mathbb{S}_\lambda(V)$ as a subrepresentation of $V^{\otimes k}$.

If $W$ is a finite dimensional graded $\Sigma_k$-representation, we more generally define its associated Schur functor by
$$V\mapsto W\otimes_{\Sigma_k}V^{\otimes k},$$ 
and denote it by $\mathbb{S}_W$. Note that since $W$ is finite dimensional, this functor decomposes as a direct sum of Schur functors $\mathbb{S}_\lambda$ (possibly shifted in degree).

Even more generally, given a symmetric sequence $W=(W(1),W(2),\ldots)$ of (graded) vector spaces, i.e.\ a sequence in which $W(k)$ is a graded $\Sigma_k$-representation, we can associate to it the endofunctor $\bigoplus_{k\ge 1} \mathbb{S}_{W(k)}\circ\V$ of $\mathrm{gr}\mathrm{Vect}_\Q$,
which we will denote by $\mathbb{S}_W$ and call the Schur functor associated to $W$.

Schur functors are of interest to us, since they preserve stability degree and weight in the following way:

\begin{prop}\label{propSchurFT}
Let $W=(W(1),W(2),\ldots)$ be a symmetric sequence of graded vector spaces, where each $W(k)$ is finite dimensional and concentrated in non-negative degree, and let $\V:\FI\to\mathrm{grVect}_\Q$ be a graded $\FI$-module, such that $\V(S)$ is concentrated in strictly positive degrees for every $S\in\FI$. Suppose that $\V(S)$ is finite dimensional in each degree and that $\weight(\V_i)\le s$ and $\stabdeg(\V_i)\le r$ for all $i\le m$. Then we have $\weight\left((\mathbb{S}_W\circ \V)_m\right)\le m s$ and $\stabdeg\left((\mathbb{S}_W\circ\V)_m\right)\le\max(r+s,m s)$.
\end{prop}

\begin{proof}
By definition $\mathbb{S}_W\circ \V$ decomposes as the direct sum
$$\bigoplus_{k\ge 1}\mathbb{S}_{W(k)}\circ \V$$
and we may decompose each summand further as
$$\mathbb{S}_{W(k)}\circ\V= \bigoplus_{j\ge 0}\bigoplus_{i\ge 1} W(k)_j\otimes_{\Sigma_k} (\V^{\otimes k})_i.$$
Since $W(k)$ is concentrated in non-negative degree and $\V$ is concentrated in positive degree, it follows that $\mathbb{S}_{W(k)}\circ\V$ is concentrated in degrees $\ge k$. We thus have
$$\left(\mathbb{S}_W\circ\V\right)_m= \bigoplus_{k=1}^m \bigoplus_{i+j=m} W(k)_j\otimes_{\Sigma_k}(\V^{\otimes k})_i$$
By Corollary \ref{cor:stabdeg-summand}, it thus suffices to find bounds on the weight and stability degree of $W(k)_j\otimes_{\Sigma_k}(V^{\otimes k})_i$, for all $k\le m$ and all $i,j$ such that $i+j=m$. By definition, this is a quotient of the $\FI$-module $W(k)_j\otimes (\V^{\otimes k})_i$. Since $W(k)_j$ is a constant $\FI$-module and $(\V^{\otimes k})_i$ decomposes as a direct sum of summands of the form $\V_{l_1}\otimes\cdots\otimes\V_{l_k}$, such that $l_1+\cdots+l_k=i$, it follows by Proposition \ref{prop:weightstab-tensor} and Proposition \ref{prop:weightstab-subquotient} that $\weight(W(k)_j\otimes (\V^{\otimes k})_i)\le is$ and $\text{surj-deg}(W^j\otimes (\V^{\otimes k})_i)\le \max(r+s,is)$.

By the discussion above, we also have that $W(k)_j\otimes_{\Sigma_k} (\V^{\otimes k})_i$ is isomorphic to a direct sum of graded $\FI$-submodules of $(\V^{\otimes k}[j])_i$ (by decomposing $W(k)_j$ into irreducible $\Sigma_k$-representations and applying the corresponding Young symmetrizer for each summand), where $[j]$ denotes a shift of $j$ degrees upwards. Thus we get the same bound on injectivity degree, finishing the proof, since $i\le m$.
\end{proof}

\subsection{Derivation Lie algebras as $\FI$-Lie algebras} Now let us introduce more specific examples of $\FI$-modules that will be of interest to us. Here it will be useful to work with $\FI^\#$-modules. We make the following definition:

\begin{dfn}
Let $H$ be a graded vector space. We define a graded $\FI^\#$-module $\mathcal{H}$ by letting $\mathcal{H}(S):=H^{\oplus S}$, for any $S\in\FI$ and for any $A\subset S$, $B\subset T$ and bijection $f:A\to B$, we define a linear map $\mathcal{H}(f):\mathcal{H}(S)\to\mathcal{H}(T)$ as the composition 
$$\mathcal{H}(S)\twoheadrightarrow\mathcal{H}(A)\to\mathcal{H}(B)\hookrightarrow\mathcal{H}(T),$$
where the first map is the natural projection, the second is the map induced by $f$ and the last is the natural injection. \hfill $\diamond$
\end{dfn} 

In the following sections, $H$ will be the desuspension of the reduced homology of a simply connected finite CW-complex, so that its homology is finite dimensional. We then have $\weight(\mathcal{H})=1$, since $H^{\oplus S}$ decomposes into a direct sum of trivial and standard representations of $\Sigma(S)$, which correspond to the padded partitions $\lambda[|S|]$ of $\lambda=(1)$ and $\lambda=(0)$, respectively. It is also easily verified that $\stabdeg(\mathcal{H})=1$.

Composing with the free graded Lie algebra functor $\L$, we get a new graded $\FI^\#$-module, which we denote by $\L \mathcal{H}$.

Since $\mathcal{H}$ is an $\FI^\#$-module, we may consider its dual $\FI^\#$-module $\mathcal{H}^*$. Let us describe it in some more detail. For a finite set $S$, we simply have $\HH^*(S)=\HH(S)^*=(H^*)^{\oplus S}$ and if $S\supseteq A\overset{f}{\to}B\subseteq T$ is a partial injection, then $\HH^*(S\supseteq A\overset{f}{\to}B\subseteq T)$ is the composition
$$\HH^*(S)\twoheadrightarrow \HH^*(A)\xrightarrow{ \circ H(f^{-1})}\HH^*(B)\hookrightarrow\HH^*(T).$$

\begin{rmk}
If we restrict this $\FI^\#$-module to $\FI$ and $i:S\hookrightarrow T$ is an injection, we can describe the map $\mathcal{H}^*(i)$ as follows: let $\phi\in\mathcal{H}^*(S)$ and $x_\alpha$ be in the summand of $H^{\oplus T}$ corresponding to $\alpha\in T$. Then
\begin{align}\label{H*-inclusion}
    \Big(\mathcal{H}^*(i)(\phi)\Big)(x_\alpha)=\begin{cases} 0&\text{ if }\alpha\in T\setminus i(S),\\ \left(\phi\circ\mathcal{H}(i)^{-1}\right)(x_\alpha)&\text{ if }\alpha\in i(S). \end{cases}
\end{align}
\end{rmk}

Just as for $\mathcal{H}$, the following proposition is easily verified:

\begin{prop}\label{prop-H*-finitely generated}
If $H$ is a finite dimensional graded vector space, then the graded $\FI^\#$-module  $\mathcal{H}^*$ has weight $\le 1$ and stability degree $\le 1$.\hfill $\diamond$
\end{prop}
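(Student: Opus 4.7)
The plan is to recognize $\mathcal{H}^{*}$ as being, in disguise, the same type of $\FI$-module as $\mathcal{H}$ but built from the dual graded vector space $H^{*}$ in place of $H$. Once that identification is in place, the finite generation claim becomes the same easy observation that was used for $\mathcal{H}$ itself: a basis of $\mathcal{H}^{*}(\mathbf{1}) = H^{*}$ generates everything.

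First I would write down the natural isomorphism
$$
\mathcal{H}^{*}(S) \;=\; (H^{\oplus S})^{*} \;\cong\; (H^{*})^{\oplus S},
$$
available because $H$ is finite dimensional, so that a functional on $H^{\oplus S}$ is determined by its restriction to each summand and conversely. Under this identification, $\mathcal{H}^{*}(S)$ decomposes as a direct sum indexed by the elements of $S$, with the $\alpha$-th summand being a copy of $H^{*}$.

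Next I would check that under the above identification, the structure map $\mathcal{H}^{*}(i)\colon \mathcal{H}^{*}(S)\to\mathcal{H}^{*}(T)$ given by formula (\ref{H*-inclusion}) is precisely the natural inclusion $(H^{*})^{\oplus S}\hookrightarrow (H^{*})^{\oplus T}$ sending the $\alpha$-th summand identically onto the $i(\alpha)$-th summand. The condition of vanishing on $T\setminus i(S)$ in (\ref{H*-inclusion}) matches the zero components outside $i(S)$, while the clause $\phi\circ\mathcal{H}(i)^{-1}$ on $i(S)$ matches identifying the $\alpha$-th component of $\phi$ with the $i(\alpha)$-th component of the image. Thus $\mathcal{H}^{*}$ is isomorphic as an $\FI$-module to the $\FI$-module built from $H^{*}$ by the construction that produced $\mathcal{H}$ from $H$.

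Finally, by the same argument used for $\mathcal{H}$, this $\FI$-module is finitely generated: any basis of $\mathcal{H}^{*}(\mathbf{1}) = H^{*}$ (finite, since $H$ is finite dimensional) generates $\mathcal{H}^{*}$, because for every $S\in\FI$ and every $\alpha\in S$ the inclusion $\{\alpha\}\hookrightarrow S$ hits the $\alpha$-th summand of $\mathcal{H}^{*}(S)$, and these summands span $\mathcal{H}^{*}(S)$. The only potential pitfall is making sure that the contravariance one usually expects from dualization does not occur here — but this is precisely what the slightly artificial-looking extension-by-zero in (\ref{H*-inclusion}) is engineered to avoid, and verifying this compatibility on morphisms is the only step requiring any care.
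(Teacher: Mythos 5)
Your proposal is correct and matches the paper's intent: the paper omits the proof as ``easily verified'' in direct analogy with $\mathcal{H}$, and your identification $\mathcal{H}^*(S)\cong (H^*)^{\oplus S}$ with extension-by-zero structure maps, followed by generation from a basis of $\mathcal{H}^*(\mathbf{1})$, is exactly that analogy made explicit. Your closing remark about the extension-by-zero being what rescues covariance is also precisely the point of the paper's subsequent remark on left versus right $\FI$-modules.
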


Next, we will define the graded $\FI^\#$-Lie algebra of derivations on the graded $\FI^\#$-Lie algebra $\L\HH$. Recall that if $L$ is a graded Lie algebra, we define a derivation on $L$ as a (graded) linear map $D:L\to L$, which satisfies
$$D[x,y]=[Dx,y]+(-1)^{|x||D|}[x,Dy],$$
for all $x,y\in L$. We denote the graded vector space of all derivations by $\Der(L)$. With this, we make the following definition:

\begin{dfn}\label{dfn:DerFImodule}
    We define the graded $\FI$-module $\Der(\L\mathcal{H}):\FI\to\mathrm{grVect}_\Q$ by letting $\Der(\L\mathcal{H})(S)=\Der(\L\mathcal{H}(S))$ for $S\in \FI$, and for $i:S\hookrightarrow T$ an injection, we define $\Der(\L\mathcal{H})(i)$ as follows: Recall that a derivation on $\L\mathcal{H}(T)$ is uniquely determined by its restriction to $\mathcal{H}(T)$. Suppose therefore that $x_\alpha\in\mathcal{H}(T)$ lies in the direct summand of $\mathcal{H}(T)$ corresponding to $\alpha\in T$ and let $D\in \Der(\L (H^{\oplus S}))$. Then $\Der(\L\mathcal{H})(i)D$ is determined by
\begin{align}\label{DerLH-inclusion}
    \Big(\Der(\L\mathcal{H})(i)D\Big)(x_\alpha)=\begin{cases} 0 &\text{ if }\alpha\in T\setminus i(S),\\
    \left(\L\mathcal{H}(i)\circ D\circ \mathcal{H}(i)^{-1}\right)(x_\alpha) &\text{ if } \alpha\in i(S).\end{cases}
\end{align}
\end{dfn}

\begin{rmk}
    The functor $\Der(\L\HH)$ may in fact be extended to all of $\FI^\#$ using a similar definition, but since we will not be using this, we only consider the simpler functor from $\FI^\#$.
\end{rmk}

For a graded Lie algebra $L$, the commutator Lie bracket 
$$[D,D']=D\circ D'-(-1)^{|D||D'|}D'\circ D$$
makes $\Der(L)$ into a graded Lie algebra. A straightforward computation using \eqref{DerLH-inclusion} shows that
$$\Der(\L\mathcal{H})(i)[D,D'] =[\Der(\L\mathcal{H})(i)(D),\Der(\L\mathcal{H})(i)(D')] ,$$
giving us the following result:

\begin{prop}\label{prop:DerAsFImodule}
The functor $\Der(\L\HH):\FI\to\mathrm{grVect}_\Q$ of Definition \ref{dfn:DerFImodule} factors through the forgetful functor $\mathrm{grLie}_\Q\to\mathrm{grVect}_\Q$, where $\mathrm{grLie}_\Q$ is the category of graded Lie algebras over $\Q$.
\end{prop}

Furthermore, we can determine explicit weights and stability degrees in each degree of this graded $\FI$-module:

\begin{prop}\label{corollaryDerivations}
Let $H$ be a finite dimensional graded vector space concentrated in strictly positive degrees. If the degree of $H$ is bounded strictly below $d$, for some $d\ge 1$, we have $\weight\left(\Der(\L\mathcal{H})_m\right)\le m+d$ and $\stabdeg\left(\Der(\L\mathcal{H})_m\right)\le m+d$. 
\end{prop}

\begin{proof}For every $S\in\FI$, we have an isomorphism of graded vector spaces
$$\Psi_S:\mathcal{H}^*(S)\otimes\L\mathcal{H}(S)\overset{\cong}{\to}\Der(\L\mathcal{H})(S)$$
given by sending $\phi\otimes A\in \mathcal{H}^*(S)\otimes\L\mathcal{H}(S)$ to the derivation in $\Der(\L\mathcal{H})(S)$ defined by
$$x\mapsto \phi(x)A,$$
on $x\in\mathcal{H}(S)$. We want to prove that this defines a map of graded $\FI$-modules, i.e.\ that for every morphism $i:S\hookrightarrow T$, the diagram
\begin{equation}
    \begin{tikzcd}
        \mathcal{H}^*(S)\otimes\L\mathcal{H}(S)\arrow[r,"\Psi_S"]\arrow[d,"\mathcal{H}^*(i)\otimes\L\mathcal{H}(i)"']&\Der(\L\mathcal{H})(S)\arrow[d,"\Der(\L\mathcal{H})(i)"]\\
        \mathcal{H}^*(T)\otimes\L\mathcal{H}(T)\arrow[r,"\Psi_T"]&\Der(\L\mathcal{H})(T)
    \end{tikzcd}
\end{equation}
is commutative. This can be verified by applying the definitions of $\Psi_S$ and $\Psi_T$, together with the description of $\mathcal{H}^*(i)$ given by (\ref{H*-inclusion}), and the description of $\Der(\L\mathcal{H})(i)$, given by (\ref{DerLH-inclusion}).

Thus $\Der(\L\mathcal{H})\cong\mathcal{H}^*\otimes\L\mathcal{H},$ as graded $\FI$-modules. Note that $\mathcal{H}^*$ is concentrated in \textit{negative} degrees, but which are bounded from below, by the assumption on $H$. We thus have $$(\mathcal{H}^*\otimes\L\mathcal{H})_m= \bigoplus_{i=1}^{d-1}(\mathcal{H}^*)_{-i}\otimes (\L\mathcal{H})_{m+i}.$$
Since both $\mathcal{H}$ and $\mathcal{H}^*$ are of weight and stability degree 1, the same argument as in the proof of Proposition \ref{propSchurFT} shows that $(\mathcal{H}^*)_{-i}\otimes (\mathbb{L}\mathcal{H})_{m+i}$ is simultaneously a quotient of an $\FI$-module of weight and stability degree $\le m+i+1$, so $(\mathcal{H}^*\otimes\L\mathcal{H})_m$ thus has both weight and stability degree $m+d$, due to Propositions \ref{prop:weightstab-subquotient} and \ref{cor:stabdeg-summand}.
 \end{proof}

The $\FI$-modules that we consider in Theorem \ref{theorem1} are the homology groups of graded $\FI$-modules of the type $\Der(\L\mathcal{H})$, with $H$ as above, so it will follow immediately from Proposition \ref{prop:weightstab-homology} that we get the claimed bounds on weight and stability degree. In the case of Theorem \ref{theorem2}, it turns out that we can use Proposition \ref{propSchurFT} more directly, due to results from \cite{BM14}.

\subsection{$\FI$-Lie models}\label{sec:FI-models} Now, let us introduce the notion of an $\FI$-\textit{Lie model}, which will be one of the main tools of the paper. For the basic theory of Lie models in rational homotopy theory, see for example \cite{felixrht}.

\begin{dfn}\label{Definition-FI-model} Let $\mathcal{X}$ be a simply connected based $\FI$-space and let $\mathcal{L}$ be a dg $\FI$-Lie algebra.  We say that $\mathcal{L}$ is an $\FI$-Lie model for $\mathcal{X}$ if\begin{enumerate}
    \item for every $S\in\FI$, $\mathcal{L}(S)$ is a dg Lie model for the space $\mathcal{X}(S)$ and
    \item for every morphism $S\hookrightarrow T$ in $\FI$, the dgl map
    $$\mathcal{L}(S)\to\mathcal{L}(T)$$
    is a model for the map $\mathcal{X}(S)\to\mathcal{X}(T)$.\hfill$\diamond$
\end{enumerate}

\end{dfn}

\begin{rmk}
If $\mathcal{L}$ is an $\FI$-Lie model for $\mathcal{X}$, then $H_*(\mathcal{L})\cong\pi_*^\Q(\mathcal{X})$ is an isomorphism of $\FI$-modules. \strut\hfill$\diamond$
\end{rmk}

\begin{rmk}A reader may feel that Definition \ref{Definition-FI-model} is somewhat unnatural. 
Indeed, it is not the ``philosophically'' correct definition of $\FI$-Lie model, seen from a modern homotopy theoretic perspective. There is an equivalence of $\infty$-categories
$$(\mathrm{dgLie}_\Q)_{\ge 1}\cong\mathrm{Top}_{\ge 2}^\Q,$$
between the $\infty$-categories of connected dg Lie algebras, localized at the quasi-isomorphisms, and simply connected spaces, localized at the rational homotopy equivalences. The usual definition of dg Lie models in rational homotopy theory may be expressed by saying that a connected dg Lie algebra $(L,d)$ is a dg Lie model for a simply connected space $X$ if they are isomorphic under this equivalence. Equivalently, it suffices to require that they are isomorphic under the equivalence between the homotopy categories $h(\mathrm{dgLie}_\Q)_{\ge 1}\cong h\mathrm{Top}^\Q_{\ge 2}$. The correct definition of $\FI$-Lie model should therefore be that a dg $\FI$-Lie algebra $\mathcal{L}$ is a $\FI$-Lie model of a simply connected pointed $\FI$-space $\mathcal{X}$ if they are isomorphic under the equivalence of the homotopy categories
$$h\mathrm{Fun}(\FI,(\mathrm{dgLie}_\Q)_{\ge 1})\cong h\mathrm{Fun}(\FI,\mathrm{Top}^\Q_{\ge 2}).$$
In contrast, our definition is requiring isomorphism under the equivalence of ``ordinary'' functor categories
$$\mathrm{Fun}(\FI,h(\mathrm{dgLie}_\Q)_{\ge 1})\cong\mathrm{Fun}(\FI,h\mathrm{Top}^\Q_{\ge 2}).$$
Nevertheless, the naive Definition \ref{Definition-FI-model} is simpler and sufficient for the purposes of this paper.\hfill $\diamond$
\end{rmk}

\section{Rational homotopy theory for homotopy automorphisms}
In this section, we will review some rational homotopy theory for homotopy automorphisms needed for this paper.

Let $X$ be a simply connected topological space homotopy equivalent to a CW-complex. A homotopy automorphism of $X$ is a self-map $\varphi\colon X\to X$, that is a homotopy equivalence. We denote the topological monoid of unpointed and pointed homotopy automorphisms of $X$ by $\aut(X)$ and $\aut_*(X)$, respectively. Given a subspace $A\subset X$, we denote the topological monoid of $A$-relative homotopy automorphisms of $X$, i.e.  the homotopy automorphisms that preserve $A$ pointwise, by $\aut_A(X)$. When $A$ is a point or empty we simply write $\aut_*(X)$ and $\aut(X)$, respectively and when $X=N$ is a manifold with boundary $A=\partial N$, the monoid of boundary relative homotopy automorphisms of $N$ is denoted by $\aut_\partial(N)$.

 If $X$ is well pointed and $A\subset X$ is a cofibration of cofibrant spaces in the Hurewicz model structure, then all of $\aut(X)$, $\aut_*(X)$ and $\aut_A(X)$ are group-like monoids, and thus equivalent to topological groups.  We take the basepoint of a topological monoid $G$ to be the identity element and $\pi_k(G,\id)$ is abbreviated by $\pi_k(G)$.  We  denote the classifying space of $G$ by $BG$ and its universal cover by $\widetilde{BG}$. Moreover, if a topological monoid $G$ is group-like, then $G$ and $\Omega BG$ are weakly equivalent as topological monoids.  Let  $G_\circ\subset G$ denote the connected component of the identity. We have that $BG_\circ \simeq \widetilde{BG}$. We observe that 
$$\pi_k(G)\ot\Q\cong \pi_{k+1}(\widetilde{BG})\ot\Q\cong \pi_{k+1}(BG_\circ)\ot\Q\cong H_k(\mathfrak g_{BG_\circ})$$
for all $k\geq 1$ and where $\mathfrak g_{BG_\circ}$ is any dg Lie algebra model for $BG_\circ$.

The identity component of $\aut_A(X)$ is denoted by $\aut_{A,\circ}(X)$.
\begin{rmk}
By \cite{farjoun}, there are functorial and continuous rationalization
functors that preserve cofibrations. In particular, given a cofibration $A\subset X$, there is a rationalization functor that induces  a group homomorphism $r\colon\pi_0(\aut_A(X))\to \pi_0(\aut_{A_\Q}(X_\Q))$.\hfill$\diamond$\end{rmk}

 For $k\geq 1$ we have that
$$
\pi_k(\aut_A(X))\ot\Q \cong \pi_k(\aut_{A_\Q}(X_\Q)),
$$
since $B\aut_{A,\circ}(X)_\Q \simeq B\aut_{A_\Q,\circ}(X_\Q)$, see \cite[Proposition 2.4]{BS19}.

A model for $B\aut_{A,\circ}(X)$ is given in terms of dg Lie algebras of derivations. 

\begin{dfn}
Given a dg Lie algebra $L$, let $\Der(L)$ denote the dg Lie algebra of derivations of $L$ where the graded Lie bracket is given by 
$$[\theta,\eta] = \theta\circ \eta - (-1)^{|\theta||\eta|}\eta\circ\theta,$$ and the differential is given by $\partial = [d_L,-]$ where $d_L$ is the differential of $L$.\hfill$\diamond$
\end{dfn}

\begin{dfn}
Given a chain complex  $C = C_*$, the positive truncation of $C$, denoted by $C^+$, is given by
$$C^+_i =\left\{\begin{array}{ll}
C_i&\quad\text{if }i>1\\
\ker(C_1\xrightarrow dC_0)&\quad\text{if }i=1\\
0&\quad\text{if }i<1.
\end{array}\right.$$\hfill$\diamond$
\end{dfn}

\begin{dfn}
A dg Lie algebra $(\L(V),d)$ is called  \textit{quasi-free} if its underlying graded Lie algebra structure is a free graded Lie algebra on graded vector space $V$.
\end{dfn}
\begin{dfn}
We say that a dg Lie algebra map between two quasi-free dg Lie algebras $\phi\colon \L(V)\to \L(U)$ is free if $\phi$ is injective and $\phi(V)\subseteq U$. In particular $U$ has a decomposition $U \cong V\oplus W$. 
\end{dfn}

\begin{rmk}
One can show that the free maps between the quasi-free dg Lie algebras are exactly the cofibrant maps between them (see the remark after \cite[Proposition 5.5]{quillen69}).\end{rmk}

 \begin{prop} \label{prop:models-for-haut}
\begin{itemize} 
\item[(a)] Let $X$ be a simply connected space of the homotopy type of a finite CW-complex with a quasi-free dg Lie algebra model $\L_X$. A dg Lie model for $B\aut_{*,\circ}(X)$ is given by $\Der^+(\L_X)$.
\item[(b)] Let $A\subset X$ be cofibration of simply connected spaces of the homotopy type of finite CW-complexes, and let $\L_A\to\L_X$ be a cofibration (i.e a free map) of quasi-free dg Lie algebras that models the inclusion $A\subset X$. A dg Lie model for  $B\aut_{A,\circ}(X)$ is given by the positive truncation of the dg Lie algebra of derivations on $\L_X$  that vanish on $\L_A$, denoted by $\Der^+(\L_X\|\L_A)$.
\item[(c)] The inclusion $\Der^+(\L_X\|\L_A) \to \Der^+(\L_X)$ is a model for $B\aut_{A,\circ}(X)\to B\aut_{*,\circ}(X)$ induced by the inclusion $\aut_{A,\circ}(X)\hookrightarrow\aut_{*,\circ}(X)$.
\hfill $\diamond$
\end{itemize}
\end{prop}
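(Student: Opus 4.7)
The plan is to build an explicit dg Lie model for $B\aut_{*,\circ}(X)$ out of the deformation theory of the Lie model $\L_X$, and then restrict to the relative setting for parts (b) and (c). The guiding principle is that, under the Quillen equivalence between connected dg Lie algebras and simply connected rational spaces, self-equivalences correspond to infinitesimal self-equivalences, i.e. derivations, and positive truncation on the Lie side corresponds to passing to the identity component on the space side.

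For part (a), with $\L_X = (\L V, d)$ quasi-free, I would consider the dg Lie algebra $\Der(\L_X)$ and its positive truncation $\Der^+(\L_X)$. Degree $0$ cycles in $\Der(\L_X)$ are precisely the strict dg Lie automorphisms of $\L_X$, which via the Quillen functor correspond to pointed self-homotopy equivalences of $X$; positively graded derivations account for the gauge action that trivializes choices of homotopy. To obtain a model for $B\aut_{*,\circ}(X)$ itself, I would compare $\aut_{*,\circ}(X)_\Q$ to the simplicial group $\MC_\bullet(\Der^+(\L_X) \ot \Omega_\bullet)$, where $\Omega_\bullet$ is the Sullivan--de Rham simplicial cdga, and use a universal fibration argument to exhibit a rational equivalence $B\aut_{*,\circ}(X)_\Q \simeq |{\MC_\bullet(\Der^+(\L_X) \ot \Omega_\bullet)}|$. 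The positive truncation is what guarantees we capture $B$ of the identity component; this recovers in particular $\pi_k(\aut_{*,\circ}(X)) \ot \Q \cong H_k(\Der^+(\L_X))$ for $k \geq 1$, as needed.

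For part (b), I would replicate the same argument with the Lie subalgebra $\Der(\L_X \| \L_A)$ of derivations vanishing on $\L_A$. Freeness of $\L_A \hookrightarrow \L_X$ (guaranteed because $A \subset X$ is a cofibration) ensures that $\L_X$ decomposes as $\L_A$ together with a chosen complement of generators, so a derivation vanishing on $\L_A$ is determined by its values on these extra generators. The Maurer-Cartan elements of $\Der^+(\L_X \| \L_A)$ then correspond precisely to perturbations of the differential of $\L_X$ that restrict to the identity on $\L_A$, i.e. to rational self-equivalences of $X$ relative to $A$. Positive truncation again picks out the identity component, yielding the desired model for $B\aut_{A,\circ}(X)$. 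Part (c) is then formal: by the very construction of the models in (a) and (b), the inclusion $\Der^+(\L_X \| \L_A) \hookrightarrow \Der^+(\L_X)$ is a dg Lie map compatible with the inclusion of automorphism monoids, hence models the induced map of classifying spaces.

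The main obstacle is the input to part (a), namely establishing that $\Der^+(\L_X)$ is genuinely a model for the \emph{classifying space} $B\aut_{*,\circ}(X)$ rather than just producing the correct homotopy groups. This rests on the existence of a universal $X$-fibration, or equivalently on identifying $B\aut_{*,\circ}(X)_\Q$ with a Deligne-type simplicial classifying space of $\Der^+(\L_X)$, and requires carefully checking that rationalization commutes with the bar construction in this setting. Once this comparison is in place, parts (b) and (c) follow by essentially formal restriction and naturality arguments, so I would spend the bulk of the effort on the universal fibration / Maurer-Cartan comparison in (a) and then deduce the rest.
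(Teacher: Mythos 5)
First, note that the paper does not actually prove this proposition: it is imported from Tanré \cite{tanre83} (part (a)) and Berglund--Saleh \cite{BS19} (parts (b) and (c)), so there is no internal proof to compare against. Your strategy is the standard one used in those references, and its monoid-level half is precisely what the paper re-proves as Proposition \ref{prop:equivalence-of-monoids-two}: one lets $\exp_\bullet(\Der^+(\L_X\|\L_A))$ act on $\MC_\bullet(\L_X)$ by $\Theta.x=\sum_{i\ge0}\Theta^i(x)/i!$, shows the resulting monoid map to $\aut_{A_\Q,\circ}(X_\Q)$ is a weak equivalence, and then applies the classifying space functor; (a) is the case $A=*$ and (c) follows by naturality of the construction. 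So the route you outline is the right one.

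That said, several details need repair, and the central step is only announced rather than carried out. Degree-$0$ cycles of $\Der(\L_X)$ are \emph{infinitesimal} automorphisms (derivations commuting with the differential), not automorphisms; automorphisms homotopic to the identity arise by exponentiating them, and in any case $\Der^+(\L_X)$ has no degree-$0$ part at all, its degree-$1$ part already being truncated to cycles. Likewise ``Maurer--Cartan elements of $\Der^+(\L_X\|\L_A)$'' is not the right object: a positively graded dg Lie algebra has only the trivial MC element, and what you want is the simplicial set $\MC_\bullet$ (MC elements after tensoring with $\Omega_\bullet$), or equivalently the simplicial group $\exp_\bullet$. Finally, the comparison $B\aut_{*,\circ}(X)_\Q\simeq \MC_\bullet(\Der^+(\L_X))$, which you correctly identify as the crux, is the entire content of Tanré's theorem; it is established either via the classification theory of fibrations or, as in the proof of Proposition \ref{prop:equivalence-of-monoids-two}, by producing explicit maps $H_k(\mathfrak{h})\to\pi_k(\exp_\bullet(\mathfrak{h}))$ and $\pi_k(\aut_{A_\Q,\circ}(X_\Q))\to H_k(\mathfrak{h})$ whose composite with $\pi_k(F)$ is the identity. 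As a plan your proposal is sound and faithful to the cited sources; as a proof it stops exactly where the work begins.
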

\begin{proof}
For (a), see \cite[Corollarie VII.4. (4)]{tanre83}. For (b), see \cite[Theorem 1.1]{BS19}. Statement (c) follows by \cite[Proposition 4.6]{BS19} and the theory established in \cite[§ 3.4, 3.5]{berglund17}.
\end{proof}

We recall the notion of  geometric realizations of dg Lie algebras. For a detailed account on the subject refer the reader to \cite{hinich97}, \cite{getzler09}, \cite{berglund15} and \cite{berglund17}.  
\begin{dfn}[\text{\cite[Definition 2.1.1]{hinich97}}]
Let $\Omega_\bullet=\Omega_\bullet^*$ denote the simplicial commutative dg algebra in which $\Omega_n^*$ is the Sullivan-de Rham algebra of polynomial differential forms on the $n$-simplex. The geometric realization of a positively graded dg Lie algebra $L$   is defined to be the simplicial set $\MC(L\ot\Omega_\bullet)$ of Maurer-Cartan elements of the simplicial dg Lie algebra $L\ot\Omega_\bullet$, denoted by $\MC_\bullet(L)$. We recall that the tensor product $L\ot \Omega$ of a dg Lie algebra $L$ with a commutative dg algebra $\Omega$ is again a dg Lie algebra, where $[\ell_1\ot c_1, \ell_2\ot c_2] = (-1)^{|c_1||l_2|}[\ell_1,\ell_2]\ot c_1c_2$.
 A positively graded dg Lie algebra $L$ is a Lie model for a simply connected space $X$ if and only if there exists a zig-zag of rational homotopy equivalences
between the geometric realization $\MC_\bullet(L)$ and $X$.\hfill$\diamond$ \end{dfn}

 The functor $\MC_\bullet$ takes surjections to Kan fibrations (\cite[Proposition 4.7]{getzler09}) and takes injections to cofibrations (in the classical model structure on simplicial sets). In particular if $\L_A\to \L_X$ is a free map of dg Lie algebras that models a cofibration $A\subset X$, then the cofibration $\MC_\bullet(\L_A)\hookrightarrow \MC_\bullet(\L_X)$ is a simplicial model for the cofibration $A_\Q\subset X_\Q$. Thus,  $\aut_{A_\Q}(X_\Q)$ and $\aut_{\MC_\bullet(\L_A)}(\MC_\bullet(\L_X))$ are weakly equivalent as topological monoids.

\begin{dfn}
The exponential $\exp(\mathfrak h)$ of a nilpotent Lie algebra  $\mathfrak h$ concentrated in degree zero is the nilpotent group with the underlying set given by $\mathfrak h$ and with multiplication given by the Baker-Campbell-Hausdorff formula. The exponential of a positively graded dg Lie algebra $L$, denoted $\exp_\bullet(L)$, is the simplicial group given by the exponential  $\exp(Z_0(L\ot \Omega_\bullet))$ of the zero cycles in $L\ot \Omega_\bullet$ (see \cite{berglund17}).\hfill $\diamond$
\end{dfn}

\begin{prop}[\text{\cite[Corollary 3.10]{berglund17}}]
For a positively graded dg Lie algebra $L$, there is an equivalence of topological monoids between $\exp_\bullet(L)$ and the loop space
$\Omega\MC_\bullet(L)$.\hfill $\diamond$
\end{prop}

\begin{dfn}
Let $ \L(V)\subset \L(V\oplus W)$ be a cofibration of free positively graded dg Lie algebras and let $\Der(\L(V\oplus W)\|\L(V))$ denote the dg Lie algebra of derivations on $\L(V\oplus W)$ that vanish on $\L(V)$ (the differential is $[d_{\L(V\oplus W)},-])$.
There is a left action of $\exp_\bullet(\Der^+(\L(V\oplus W)\|\L(V)))$ on $\MC_\bullet(\L(V\oplus W))$ given by 
\begin{equation}\label{eqn:monoid-equivalence}
\Theta.x= \sum_{i\geq 0} \frac{\Theta^i(x)}{i!}
\end{equation}
(see \cite[§ 3.2]{BS19}).\hfill$\diamond$
\end{dfn}

\begin{prop}[cf. \text{\cite[Proposition 3.7]{berglund20}}]\label{prop:equivalence-of-monoids-two}
Let $A \subset X$ be a cofibration of simply connected
spaces with homotopy types of finite CW-complexes, and let $\iota\colon \L(V) \to \L(V\oplus W)$ be a  free map of quasi-free dg Lie algebras that models the inclusion $A \subset X$. Then the topological monoid map
$$F\colon \exp_\bullet(\Der^+(\L(V\oplus W)\|\L(V)))\to\aut_{\MC_\bullet(\L(V)),\circ}(\MC_\bullet(\L(V\oplus W)))\simeq\aut_{A_\Q,\circ}(X_\Q),$$
$$F(\Theta)(x)=\Theta.x$$
is a weak equivalence.
\end{prop}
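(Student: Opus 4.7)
The plan is to produce the equivalence as a composition of more fundamental ones, and then verify that the explicit action formula realizes that composition. The two key ingredients are Proposition \ref{prop:models-for-haut}(b), which identifies the dg Lie model of $B\aut_{A,\circ}(X)$ with $\mathfrak{g}:=\Der^+(\L(V\oplus W)\|\L(V))$, and the preceding proposition supplying the weak equivalence $\exp_\bullet(\mathfrak{g})\simeq \Omega\MC_\bullet(\mathfrak{g})$.

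First I would verify that $F$ is a well-defined map of simplicial monoids landing in the identity component. Since any $\Theta\in\mathfrak{g}$ vanishes on $\L(V)$, so does each power $\Theta^i$, so $F(\Theta)$ fixes $\MC_\bullet(\L(V))$ pointwise. The standard facts about dgl derivations extended by $\Omega_\bullet$-linearity give that $e^\Theta$ is a dgl automorphism of $\L(V\oplus W)\ot\Omega_\bullet$ preserving Maurer-Cartan elements, so $F(\Theta)$ is a simplicial self-equivalence of $\MC_\bullet(\L(V\oplus W))$ relative to $\MC_\bullet(\L(V))$. Compatibility of the Baker-Campbell-Hausdorff product with composition, $e^{\Theta\ast\Psi}=e^\Theta\circ e^\Psi$, makes $F$ a simplicial monoid homomorphism, and since $\mathfrak{g}$ is positively graded the simplicial group $\exp_\bullet(\mathfrak{g})$ is connected, so the image of $F$ sits in the identity component.

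Next I would compare homotopy types abstractly. By Proposition \ref{prop:models-for-haut}(b) combined with the $\MC_\bullet$-realization of dg Lie models, $\MC_\bullet(\mathfrak{g})$ is rationally equivalent to $B\aut_{A,\circ}(X)$. Applying the loop-space functor together with the preceding proposition yields a zig-zag
\[
\exp_\bullet(\mathfrak{g})\simeq \Omega\MC_\bullet(\mathfrak{g})\simeq \Omega B\aut_{A_\Q,\circ}(X_\Q)\simeq \aut_{A_\Q,\circ}(X_\Q),
\]
so source and target of $F$ are already abstractly equivalent. The genuine content of the proposition is that the concrete action map $F$ realizes this abstract equivalence.

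The main obstacle is this last compatibility check. The cleanest route is to adapt the argument of \cite[Proposition 3.7]{berglund20}, which treats the analogous non-relative statement, by observing that $F$ is the restriction, along the inclusion $\Der^+(\L(V\oplus W)\|\L(V))\hookrightarrow\Der^+(\L(V\oplus W))$, of the absolute action map $\exp_\bullet(\Der^+(\L(V\oplus W)))\to\aut_{*_\Q,\circ}(X_\Q)$, and that the target inclusion $\aut_{A_\Q,\circ}(X_\Q)\hookrightarrow\aut_{*_\Q,\circ}(X_\Q)$ is the homotopy fiber of restriction to $A_\Q$. Comparing the induced ladder of long exact sequences of rational homotopy groups and applying the five lemma then upgrades the absolute equivalence to the desired relative one.
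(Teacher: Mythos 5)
Your preliminary steps (the action fixes $\MC_\bullet(\L(V))$ pointwise, $F$ is a monoid map via Baker--Campbell--Hausdorff, the source is connected) coincide with the paper's, and your abstract zig-zag $\exp_\bullet(\mathfrak{g})\simeq\Omega\MC_\bullet(\mathfrak{g})\simeq\aut_{A_\Q,\circ}(X_\Q)$ is correct but, as you yourself observe, proves nothing about $F$. The genuine gap is in your last paragraph, which is where all the content lives. To run the five lemma you need a ladder of long exact sequences, and in particular a third vertical map comparing the ``base'' of the algebraic sequence with $\map_*(A_\Q,X_\Q)$ (restricted to the right component), plus commutativity of the right-hand square; none of this is constructed. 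Concretely: the cokernel of $\Der^+(\L(V\oplus W)\|\L(V))\hookrightarrow\Der^+(\L(V\oplus W))$ is (up to truncation issues in degree $1$, where positive truncation does not commute with the quotient) the complex of $\iota$-derivations $\L(V)\to\L(V\oplus W)$, which is \emph{not} a dg Lie algebra, so $\exp_\bullet$ does not apply to it and you must argue separately that you even have a fibration sequence of simplicial sets with the claimed fiber. Most seriously, the assertion that the induced map on bases is an equivalence is precisely the Lupton--Smith/Berglund--Madsen identification of the rational homotopy groups of pointed mapping spaces with derivation homology --- an input of the same depth as the relative statement you are trying to deduce. As written, the central step is asserted rather than proved.

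For comparison, the paper argues directly on homotopy groups rather than through a fibration comparison. It uses the isomorphism $G\colon H_k(\mathfrak{h})\to\pi_k(\exp_\bullet(\mathfrak{h}))$, $z\mapsto z\ot\nu_k$ (from \cite[Proposition 3.7]{berglund20}), computes $F(\theta\ot\nu_k)=\id+\theta\ot\nu_k$ from $\nu_k^2=0$, and constructs an explicit map $K\colon\pi_k(\aut_{A_\Q,\circ}(X_\Q))\to H_k(\Der^+(\L(V\oplus W)\|\L(V)))$ by modelling a $k$-sphere family of relative automorphisms by a dg Lie map out of $\L(U\oplus s^kU)$ and extracting a derivation cycle $\theta_f$; that $K$ is a well-defined isomorphism is the (cited) input from \cite{BM14} and Lupton--Smith. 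The identity $K\circ\pi_k(F)\circ G=\id$ then forces $\pi_k(F)$ to be an isomorphism. So the external input enters once, in a form where the remaining verification is a one-line computation; your route could likely be completed along the lines of Berglund--Madsen, but you would have to supply essentially that same input to justify the right-hand column of your ladder.
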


\newcommand{\h}{\mathfrak h}
\begin{proof}
 Note that the action of $\exp_\bullet(\Der^+(\L(V\oplus W)\|\L(V)))$ on $\MC_\bullet(\L(V\oplus W))$ fixes $\MC_\bullet(\L(V))\subset \MC_\bullet(\L(V\oplus W))$ pointwise. In particular, the group action yields a map $$\exp_\bullet(\Der^+(\L(V\oplus W)\|\L(V))) \to \aut_{\MC_\bullet(\L(V))}(\MC_\bullet(\L(V\oplus W))).$$ Moreover, since $\exp_\bullet(\Der^+(\L(V\oplus W)\|\L(V)))$ is connected and $F$ preserves the identity element, we may replace the codomain by $$\aut_{\MC_\bullet(\L(V)),\circ}(\MC_\bullet(\L(V\oplus W)))$$ (i.e.\ the component of the identity).
We proceed by following the proof of \cite[Proposition 3.7]{berglund20}, and adapting it to our situation.
Given a positively graded dg Lie algebra $\h$, there is an isomorphism of abelian groups 
$$
G\colon H_k(\h)\to \pi_k(\exp_\bullet(\h))
$$
where a homology class of a cycle $z\in Z_k(\h)$ is sent to the homotopy class of the $k$-simplex $ z\ot\nu_k \in Z_0( \h\ot \Omega^*_k)$ where $\nu_k$ is the class $k!dt_1\cdots dt_k$. That $G$ defines an isomorphism is motivated in the proof of \cite[Proposition 3.7]{berglund20}. 

We have that $\nu_k^2 = 0$, and consequently 
$$F(\theta\ot\nu_k) = \id+\theta\ot\nu_k.$$

Let us now  analyze $\pi_k(\aut_{\MC_\bullet(\L(V)),\circ}(\MC_\bullet(\L(V\oplus W))))$, $k\geq 1$, as in the proof of \cite[Theorem 3.6]{BM14}. In order to simplify notation, $\MC_\bullet(\L(V))$ is denoted by $A_\Q$ and $\MC_\bullet(\L(V\oplus W))$ by $X_\Q$. We have that an element  $f\in\pi_k(\aut_{A_\Q,\circ}(X_\Q))$ is represented by a map  
$$ f\colon  (S^k\sqcup*)\wedge X_\Q\to X_\Q$$ where $f(*,x)=x$ for every $x\in X_\Q$ and  and $f(s,a)=a$ for every $a\in A_\Q$ and $s\in S^k$.

A dg Lie algebra model for $ (S^k\sqcup*)\wedge X_\Q$ is given by  $(\L(U\oplus s^kU),\partial)$ where $U= V\oplus W$ and with a differential determined by the following: Let $d$ be the differential on $\L(U)$, then $\partial(u) = d(u)$ for every $u\in U$ and $\partial(s^ku)= (-1)^ks^kd(u)$ for every $s^ku\in s^kU$.

Now, $f\colon  (S^k\sqcup*)\wedge X_\Q\to X_\Q$ is modelled  by some map $\varphi_f\colon \L(U\oplus s^k U)\to \L(U)$ that satisfy $\varphi_f(u)=u$ for every $u\in U$ and $\varphi_f(s^kv)=0$ for every $v\in V\subset U$. Now, let $\theta_f$ be the unique derivation on $\L(U)$ that satisfy $\theta_f(u)= \varphi_f(s^k u)$ for every $u\in U$. Note that $\theta_f$ is a cycle and that it vanishes on $\L(V)$, i.e.\ $\theta_f\in Z_k(\Der^+(\L(U)\|\L(V)))$. Also note that if $f= \pi_k(F)[\theta\ot \nu_k]$ then $\theta_f =\theta$. Let $K\colon   
\pi_k(\aut_{A_\Q,\circ}(X_\Q))\to H_k(\Der^+(\L(U)\|\L(V)))$ be given by $K(f)=\theta_f$. It follows from \cite{BM14} and \cite{LuptonSmith} that  this map is well-defined and is an isomorphism.

Set $\h=\Der^+(\L(U)\|\L(V))$. The composition
$$
H_k(\h)\xrightarrow G \pi_k(\exp_\bullet(\h))\xrightarrow{\pi_k(F)} \pi_k(\aut_{A_\Q,\circ}(X_\Q))\xrightarrow K H_k(\h)
$$
is the identity map, which forces $\pi_k(F)$ to be an isomorphism. This proves that $F$ is a weak equivalence.
\end{proof}

We recall the following: A topological group $G'$ acts on itself by conjugation $G'\to \Aut(G')$, $g\mapsto \kappa_g$ where $\kappa_g(h) = ghg^{-1}$. If $g$ and $g'$ belong to the same connected component of $G'$, then $\kappa_g$ and $\kappa_{g'}$ are homotopic and this induce equal maps on the homotopy groups of $G'$.  This group action restricts to a group action on the identity component $G'_\circ$ of $G'$, which in turn induces an action of $G'$ on $BG'_\circ$. This gives that $\pi_0(G')$ acts on $\pi_*(BG'_\circ)$. Since a group-like monoid $G$ is equivalent to a topological group $G'$, we have that $\pi_0(G)$ acts on $\pi_*(G_\circ)$.

In the rest of this section discuss the action of $\pi_0(\aut_A(X))$ on $\pi_k(\aut_A(X))$ from a rational homotopy point of view. In order to do that we need to recall some of the theory established in \cite{ES20}.

\begin{prop}[\text{\cite[Theorem 1.3]{ES20}}]
Given a map $f\colon \L(V)\to \mathfrak g$ of positively graded dg Lie algebras, there exists a minimal relative model $q\colon\L(V\oplus W)\xrightarrow\sim\mathfrak g$ for $f$ in the following sense:
 \begin{itemize}
 \item[{\normalfont (a)}] $\L(V)$ is a dg subalgebra of $\L(V\oplus W)$ and $f = q\circ \iota$, where $\iota\colon \L(V)\to \L(V\oplus W)$ is the inclusion.
 \item[{\normalfont(b)}] Given a quasi-isomorphism $g\colon \L(V\oplus W)\to \L(V\oplus W)$, where $g$ restricts to an automorphism of $\L(V)$, then $g$ is an automorphism.\hfill $\diamond$
 \end{itemize}
\end{prop}

\begin{dfn}
Let $\iota\colon\L(V)\to \L(V\oplus W)$ be a free map of quasi-free dg Lie algebras. We say that an endomorphism  $\varphi\colon\L(V\oplus W)\to \L(V\oplus W)$ is $\iota$-relative if $\varphi|_{\L(V)}= \id$. 
We say that two $\iota$-relative endomorphisms $\varphi$ and $\psi$ are $\iota$-equivalent if there exists a homotopy $h \colon \L(V\oplus W) \to  \L(V\oplus W)\ot \Lambda(t,dt)$ from $\varphi$ to $\psi$ that preserves $\L(V)$
in the following sense: $h(v) = v \ot1$ for every $v\in\L(V)$ (cf. \cite[§ 14 (a)]{felixrht}).

We denote the group of $\iota$-relative automorphisms of $\L(V\oplus W)$ by $\Aut_\iota(\L(V\oplus W))$.
\hfill$\diamond$
\end{dfn}

\begin{lemma}[\text{\cite[Corollary 4.6]{ES20}}]\label{lemma:espicsaleh}
Let $\iota\colon\L(V)\to \L(V\oplus W)$ be a minimal relative dg Lie model  for a cofibration  $A\subset X$ of simply connected spaces. Then there are  group isomorphisms  
$$\Aut_\iota(\L(V\oplus W))/\iota\text{-equivalence}\cong 
\pi_0(\aut_{\MC_\bullet(\L(V))}(\MC_\bullet(\L(V\oplus W))))
\cong\pi_0(\aut_{A_\Q}(X_\Q)).$$
\ \ \ \hfill $\diamond$
\end{lemma}

\begin{rmk}\label{rmk:espicsaleh}
By this lemma, it makes sense to refer to an $\iota$-relative automorphisms of $\LVW$ as an algebraic model for an $A_\Q$-relative homotopy automorphism of $X_\Q$. \hfill $\diamond$
\end{rmk}

\begin{dfn}\label{dfn:Aut-action}
Consider the group action of  $\Aut_\iota(\L(V\oplus W)$ on $\Der(\L(V\oplus W)\|\L(V))$ given by the following: For $\varphi\in \Aut_\iota(\L(V\oplus W))$ and $\theta\in \Der(\L(V\oplus W)\|\L(V))$, let
$$
\varphi.\theta = \varphi\circ \theta\circ \varphi^{-1}.
$$
This induces an action of $\Aut_\iota(\L(V\oplus W))$ on $\exp_\bullet(\Der(\L(V\oplus W)\|\L(V)))$.

There is also an action of $\Aut_\iota(\L(V\oplus W))$ on  $\aut_{\MC_\bullet(\L(V)),\circ}(\MC_\bullet(\L(V\oplus W)))$; for $\varphi\in \Aut_\iota(\L(V\oplus W))$ and $f\in \aut_{\MC_\bullet(\L(V)),\circ}(\MC_\bullet(\L(V\oplus W)))$, let 
$$
\varphi.f = \MC_\bullet(\varphi)\circ f \circ \MC_\bullet(\varphi^{-1}). 
$$
\hfill$\diamond$
\end{dfn}

\begin{prop}\label{prop:F-is-equivariant}
The equivalence
$$F\colon \exp_\bullet(\Der^+(\L(V\oplus W)\|\L(V)))\to\aut_{\MC_\bullet(\L(V)),\circ}(\MC_\bullet(\L(V\oplus W)))\simeq\aut_{A_\Q,\circ}(X_\Q),$$
of Proposition \ref{prop:equivalence-of-monoids-two} is $\Aut_\iota(\LVW)$-equivariant with respect to the actions in Definition \ref{dfn:Aut-action}.
\end{prop}

\begin{proof}
This is a straightforward verification left to the reader.
\end{proof}

\begin{cor}\label{cor:F-is-equivariant}
Let $f\in\aut_{A_\Q}(X_Q)$ and let $\varphi\in\Aut_\iota(\LVW)$ be a relative model for $f$. The automorphism 
$$\alpha_\varphi\colon\Der(\LVW\|\L(V))\to \Der(\LVW\|\L(V))$$
$$
\alpha_\varphi(\theta)=\varphi\circ\theta\circ \varphi^{-1}$$
is a model for the delooping of the homotopy autmorphism
$$    \mathrm{Ad}_f\colon \aut_{A_\Q}(X_\Q)\to \aut_{A_\Q}(X_\Q)$$
$$    \mathrm{Ad}_f(g) = f\circ g\circ f^{-1},$$
where $f^{-1}$ is an $A_\Q$-relative homotopy inverse to $f$.\hfill $\diamond$
\end{cor}

\section{Homotopy automorphisms of wedge sums}\label{sec:wedge}
We fix some  notation for this section. Let $(X,*)$ be a fixed  simply connected space homotopy equivalent to a finite CW-complex. For any finite set $S$, let $X_S := \bigvee^S X$. For any morphism $S\hookrightarrow T$ in $\FI$, there is an obvious induced base point preserving map $X_S\hookrightarrow X_T$ given by inclusion of wedge summands in the order specified by the injection $S\hookrightarrow T$. Thus the functor $S\mapsto X_S$ is a pointed $\FI$-space, which we will denote by $\X$. 

We fix a quasi-free dg Lie algebra model $\L(H)=(\L(H),d_{\L(H)})$ for $X$. A dg Lie model for $X_S$ is given by the $S$-fold free product of dg Lie algebras
$$
\L(H)^{*S}:= \L(H)*\dots*\L(H) \cong \L(H^{\oplus S})
$$
(see \cite[§ 24 (f)]{felixrht}). The association $S\mapsto \L(H^{\oplus S})$ defines a dg $\FI$-Lie algebra $\L\mathcal H$. Given a morphism $i:S\hookrightarrow T$ in $\FI$, we get an induced inclusion $H^{\oplus S}\hookrightarrow H^{\oplus T}$, which induces an inclusion $\L(H^{\oplus S})\hookrightarrow\L(H^{\oplus T})$ that models the map $\X(i):\X(S)\to\X(T)$ (this follows from e.g. Example 1 in § 12 (c) and Example 2 in § 24 (f) in \cite{felixrht}). Thus $S\mapsto \L\left(H^{\oplus S}\right)$ defines a dg $\FI$-Lie model for the pointed FI-space $S\mapsto \X(S)$.

We proceed and define another pointed $\FI$-space $\aut_*(\mathcal X)$ (the base point is always the identity) as follows: for $S\in\FI$, we let $\aut_*(\mathcal X)(S):= \aut_*( X_S)$. For $i:S\hookrightarrow T$ in $\FI$, we get a map $\aut_*(X_S)\hookrightarrow\aut_*(X_T)$, defined as follows: suppose $x_\alpha\in X_T$ lies in the wedge summand of $X_T$ corresponding to $\alpha\in T$ and let $f\in \aut_*(X_S)$. Then
\begin{align*}\label{aut-inclusion}
    \Big(\aut_*(\mathcal X)(i)f\Big)(x_\alpha)=\begin{cases} x_\alpha &\text{ if }\alpha\in T\setminus i(S),\\
    \left(\mathcal{X}(i)\circ f\circ \mathcal{X}(i)^{-1}\right)(x_\alpha) &\text{ if } \alpha\in i(S).\end{cases}
\end{align*}
Note that $\aut_*(\mathcal X)(i)f$ is in some sense an extension by identity of $f$. For instance, if $i_s\colon \mathbf n\to \mathbf{n+1}$ is the standard inclusion, then $\aut_*(\mathcal X)(i_s)f$ is the homotopy automorphism of $X_{\mathbf{n+1}}$ that coincides with $f$ on the first $n$ wedge summands, and is the identity on the last summand.

Restricting to the identity component gives a pointed sub-$\FI$-space $\aut_{*,\circ}(\mathcal X)$.
We are interested in the rational homotopy groups of this $\FI$-space.

\begin{rmk}\label{rmk:htpy-grps-of-htpy-aut}
Now it is tempting to say that we will construct an $\FI$-Lie model for $\aut_*(\X)$. However, this pointed $\FI$-space is generally not simply connected. Instead, we take a functorial classifying space construction $B\colon \mathrm{TopMon}\to \mathrm{Top}_*$ from the category of topological monoids to the category of pointed topological spaces, and consider the pointed $\FI$-space  $B\aut_{*,\circ}(\X)$, where $\aut_{*,\circ}(X_S)$ is the identity component of $\aut_*(X_S)$, for every $S\in\FI$. For every $S$, we have $B\aut_{*,\circ}(X_S)\simeq \widetilde{B\aut_*(X_S)}$ and so this is a simply connected pointed $\FI$-space, which enables us to apply our tools from rational homotopy theory. Furthermore, for every $k\ge 1$, we have 
$$\pi_k^\Q(\aut_*(X_S))\cong\pi_{k+1}^\Q(B\aut_{*,\circ}(X_S)),$$
so 
$$\pi_k^\Q(\aut_
*(\X))\cong\pi_{k+1}^\Q(B\aut_{*,\circ}(\X)),$$
as $\FI$-modules.\hfill $\diamond$
\end{rmk}

We have by Proposition \ref{prop:models-for-haut} (a) that a model for $B\aut_{*,\circ}(X_S)$ is given by $\Der^+(\L(H^{\oplus S}))$, with differential given by $[d_{\L(H^{\oplus S})},-]$. 
The inclusion $\L\mathcal H(i)\colon \L(H^{\oplus S})\hookrightarrow\L(H^{\oplus T})$ induces a graded Lie algebra $\Der^+(\L(H^{\oplus S}))\hookrightarrow \Der^+(\L(H^{\oplus T}))$ map as discussed in \ref{prop:DerAsFImodule}. Moreover, this map commutes with the differential, i.e.\ a dg Lie algebra map. This together with Proposition \ref{prop:DerAsFImodule}, yields that we have a dg $\FI$-Lie algebra $(\Der^+(\L\mathcal H),[d_{\L\mathcal H},-])$.
  
We will show that $(\Der^+(\L\mathcal H),[d_{\L\mathcal H},-])$ defines an $\FI$-Lie model for the pointed $\FI$-space $B\aut_{*,\circ}(\mathcal X)$.

\begin{prop}\label{prop:models-for-stab-sigma}
 \begin{itemize}
     \item[(a)] Let $i_s\colon \mathbf n\to \mathbf{n+1}$ denote the standard inclusion. Then a dg Lie algebra model for $$B\aut_{*,\circ}(\X)(i_s)\colon B\aut_{*,\circ}(X_n)\to B\aut_{*,\circ}(X_{n+1})$$ is given by 
     $$\varphi_n:=
     \Der^+(\L\mathcal H)(i_s)\colon\Der^+(\L(H^{\oplus n}))\to \Der^+(\L(H^{\oplus n+1}))
     $$
     
     \item[(b)] The $\Sigma_n$-action on $B\aut_{*,\circ}(X_n)$ is modelled by the $\Sigma_n$-action on $\Der^+(\L(H^{\oplus n}))$.
 \end{itemize}
\end{prop}

\begin{proof}
(a):
In order to simplify notation, let $\L(H^{\oplus k})$ be denoted by $\L_k$, and let $H_\ell\cong H$ denote the last summand of $H^{\oplus n+1}$, and let $\L_\ell$ denote $\L(H_\ell)$. In particular we have that $\L_{n+1} = \L_n*\L_\ell$. Let $c_n\colon \MC_\bullet(\L_n)\to \MC_\bullet(\L_{n+1})$ and $c_l\colon \MC_\bullet(\L_\ell)\to \MC_\bullet(\L_{n+1})$ denote the cofibrations induced by the standard inclusion $\L_n\to \L_n*\L_\ell$ and $\L_\ell\to \L_n*\L_\ell$ respectively.

From  Proposition \ref{prop:equivalence-of-monoids-two} we get topological monoid equivalences
$$F_n\colon \exp_\bullet(\Der^+(\L_n))\to\aut_*(\MC_\bullet(\L_n)).$$
Those maps have adjoints 
$$\tilde F_n\colon \exp_\bullet(\Der^+(\L_n))\times\MC_\bullet(\L_n)\to \MC_\bullet(\L_n).$$
We have by the explicit formulas for $\{F_n\}$ that
$$\tilde F_{n+1}\circ(\exp_\bullet(\varphi_n)\times c_n) = c_n\circ \tilde F_n.$$
In particular, $F_{n+1}\circ \exp_\bullet(\varphi_n)(\Theta)$ is an extension of $F_n(\Theta)$, for $\Theta\in \exp_\bullet(\Der^+(\L_n))$.

We also have that $$\tilde F_{n+1}\circ(\exp_\bullet(\varphi_n)\times c_\ell)(g,x) = c_\ell(x),\quad \forall (g,x)\in \exp_\bullet(\Der^+(\L_n))\times \MC_\bullet(\L_\ell).$$
In particular $F_{n+1}\circ \exp_\bullet(\varphi_n)(\Theta)$ restricts to the identity on $\MC_\bullet(\L_\ell)\subset \MC_\bullet(\L_{n+1})$.
That means that $\exp_\bullet(\varphi_n)$ is a simplicial model for $\aut_{*,\circ}(\X)(i_s)\colon \aut_{*,\circ}(X_n)\to \aut_{*,\circ}(X_{n+1})$. This gives (a).

(b): This is a direct consequence of Proposition \ref{prop:F-is-equivariant}  and Corollary \ref{cor:F-is-equivariant}.\end{proof}

\begin{thm} \label{thm:model-for-stab-action-one}  $(\Der^+(\L\mathcal H),[d_{\L\mathcal H},-])$ is an $\FI$-Lie model for the pointed $\FI$-space $B\aut_{*,\circ}(\X)$.
\end{thm}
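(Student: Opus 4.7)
The plan is to verify the two conditions of Definition~\ref{Definition-FI-model}. Condition (1), that $\Der^+(\L(H^{\oplus S}))$ is a dg Lie model for $B\aut_{*,\circ}(X_S)$ for every $S$, is immediate from Proposition~\ref{prop:models-for-haut}(a), together with the standard fact that the free product $\L(H^{\oplus S})\cong \L(H)^{*S}$ is a quasi-free dg Lie model for $X_S=\bigvee^S X$. The real content is condition (2): for each injection $i\colon S\hookrightarrow T$, the dgl map $\Der^+(\L\mathcal H)(i)$ models the map $B\aut_{*,\circ}(X_S)\to B\aut_{*,\circ}(X_T)$ induced by $\aut_*(\mathcal X)(i)$.

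To handle naturality, the plan is to apply Proposition~\ref{prop:equivalence-of-monoids-two} pointwise, taking $V=0$ and $W=H^{\oplus S}$, so that the cofibration $*\hookrightarrow X_S$ is modeled by $0\hookrightarrow \L(H^{\oplus S})$. This yields, for each $S\in\FI$, a weak equivalence of topological monoids
\[
F_S\colon \exp_\bullet\!\bigl(\Der^+(\L(H^{\oplus S}))\bigr)\xrightarrow{\ \simeq\ }\aut_{*,\circ}(\MC_\bullet(\L(H^{\oplus S})))\simeq\aut_{*,\circ}(X_S)_\Q,
\]
with $F_S(\Theta)(x)=\sum_{k\ge 0}\Theta^k(x)/k!$. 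I then need to verify that for every $i\colon S\hookrightarrow T$ the square
\[
\begin{tikzcd}[column sep=large]
\exp_\bullet(\Der^+(\L\mathcal H)(S))\arrow[r,"F_S"]\arrow[d,"\exp_\bullet(\Der^+(\L\mathcal H)(i))"']& \aut_{*,\circ}(X_S)_\Q\arrow[d,"\aut_*(\mathcal X)(i)_\Q"]\\
\exp_\bullet(\Der^+(\L\mathcal H)(T))\arrow[r,"F_T"]& \aut_{*,\circ}(X_T)_\Q
\end{tikzcd}
\]
commutes. This is a direct computation using the explicit formulas \eqref{DerLH-inclusion} and the formula for $\aut_*(\mathcal X)(i)$ given in Section~4: for $\Theta\in\Der^+(\L(H^{\oplus S}))$, the extended derivation $\widetilde\Theta:=\Der^+(\L\mathcal H)(i)\Theta$ vanishes on all generators $x_\alpha$ with $\alpha\in T\setminus i(S)$ and equals $\L\mathcal H(i)\circ\Theta\circ\mathcal H(i)^{-1}$ on generators from $i(S)$. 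Since a derivation is determined by its values on generators, the same identities propagate to all iterates $\widetilde\Theta^k$, which makes $F_T(\widetilde\Theta)$ act as the identity on the wedge summand $X_{T\setminus i(S)}$ and as $\L\mathcal H(i)\circ F_S(\Theta)\circ\mathcal H(i)^{-1}$ on $X_{i(S)}$, which is exactly $\aut_*(\mathcal X)(i)(F_S(\Theta))$.

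Finally, applying the classifying space functor and invoking the equivalence $\exp_\bullet(L)\simeq \Omega\MC_\bullet(L)$ (so that $B\exp_\bullet(L)\simeq\MC_\bullet(L)$ for connected positively graded $L$) promotes the natural levelwise equivalence $F_S$ to a natural weak equivalence
\[
\MC_\bullet\!\bigl(\Der^+(\L\mathcal H)(S)\bigr)\simeq B\aut_{*,\circ}(X_S)_\Q,
\]
which is precisely what is required to conclude that $(\Der^+(\L\mathcal H),[d_{\L\mathcal H},-])$ is an $\FI$-Lie model for $B\aut_{*,\circ}(\mathcal X)$ (as discussed in Remark~\ref{rmk:htpy-grps-of-htpy-aut}).

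The main obstacle I expect is not any single deep step but the bookkeeping of matching the two \emph{different} notions of ``extension'': derivations are extended by zero on new generators, while homotopy automorphisms are extended by the identity on new wedge summands. The exponential $\sum \Theta^k/k!$ converts the former into the latter precisely because $\Theta$ vanishes on those generators; writing this out carefully on Maurer–Cartan elements that mix generators from $i(S)$ and $T\setminus i(S)$ via iterated brackets requires invoking that such iterated brackets are preserved by the free-product decomposition $\L(H^{\oplus T})\cong \L(H^{\oplus i(S)})*\L(H^{\oplus T\setminus i(S)})$ and that $\widetilde\Theta$ acts as a derivation with respect to this decomposition.
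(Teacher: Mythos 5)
Your proposal is correct and follows essentially the same route as the paper: both rest on Proposition~\ref{prop:equivalence-of-monoids-two} (with $V=0$) and the explicit exponential formula, and the key computation is identical — the extension-by-zero of a derivation exponentiates to the extension-by-identity of the corresponding automorphism on the wedge decomposition, while conjugation commutes with the exponential. The only (immaterial) organizational difference is that you verify naturality for arbitrary injections $i\colon S\hookrightarrow T$ directly against Definition~\ref{Definition-FI-model}, whereas the paper invokes Lemma~\ref{lemma:consistentSeqFromFI-module} to reduce to the standard inclusions $\mathbf{n}\hookrightarrow\mathbf{n+1}$ together with the $\Sigma_n$-actions.
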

\begin{proof}
By the second part of Lemma \ref{lemma:consistentSeqFromFI-module}, an $\FI$-module is completely determined its underlying consistent sequence. By Proposition \ref{prop:models-for-stab-sigma} the stabilization maps and the $\Sigma_n$-actions defining the consistent sequence for the dg $\FI$-Lie algebra $(\Der^+(\L\mathcal H),[d_{\L\mathcal H},-])$ models the stabilization maps and the $\Sigma_n$-actions defining the consistent sequence for the $\FI$-space $B\aut_{*,\circ}(\X)$. From this we conclude that  $(\Der^+(\L\mathcal H),[d_{\L\mathcal H},-])$ is an $\FI$-Lie model for the pointed $\FI$-space $B\aut_{*,\circ}(\X)$.
\end{proof}

We have now all ingredients needed for proving Theorem \ref{theorem1}.

\theoremone*

\begin{proof}
We will use the established terminology in this section.
We have already seen in Theorem \ref{thm:model-for-stab-action-one} that $(\Der^+(\L\mathcal H),[d_{\L\mathcal H},-])$ is an $\FI$-Lie model for $B\aut_{*,\circ}(\X)$. Since $H_k(\Der^+(\L\mathcal H))\cong \pi_k^\Q(\aut_*(\X))$ (see Remark \ref{rmk:htpy-grps-of-htpy-aut}) it is thus enough to prove that $H_k(\Der^+(\L\mathcal H))$ has the stated bounds on weight and stability degree.

Since $(\Der^+(\L\mathcal{H}),[d_{\L\mathcal H},-])$, defines a dg $\FI$-Lie algebra, it follows that $H_*(\Der^+(\L\mathcal{H}))$
is a graded $\FI$-module. The truncation is defined precisely so that $H_k(\Der^+(\L\mathcal{H}))\cong H_k(\Der(\L\mathcal{H}))$ for all $k\ge 0$. Since $H=s^{-1}\tilde{H}_*(X)$ and $X$ is assumed to be simply connected, $H$ is finite dimensional and concentrated in positive degree and since we have assumed that the homology of $X$ vanishes in degree at least $d$, $H$ is concentrated in degrees strictly below $d-1$. The given bounds on stability degree and weight now follow from Propositions \ref{corollaryDerivations} and \ref{prop:weightstab-homology}.\end{proof}

\section{Homotopy automorphisms of connected sums}\label{sec:thm-B} 

Let $M$ be a closed and oriented $d$-dimensional manifold. For a non-empty finite set $S$, let $M_S = (\#^S M)\smallsetminus \mathring D^d$, be the space obtained by removing an open $d$-dimensional disk from the $S$-fold connected sum of $M$. If $S=\mathbf{n}=\{1,\ldots,n\}$ we simply write $M_n$. We then have a deformation retraction $M_S\xrightarrow \simeq \bigvee^S M_1$. Hence, there is an $\FI$-module given on objects by $S\mapsto \pi_k(\aut_*(M_S))$, as defined in the previous section. 

If we choose a base point in the boundary of $M_S$, there is an inclusion map $\aut_\partial(M_S)\to\aut_*(M_S)$ for every $S\in \FI$. In the first subsection of this section we prove that the $\FI$-module $S\mapsto\pi_k(\aut_*(M_S))$ lifts to an $\FI$-module given on objects by $S\mapsto\pi_k(\aut_\partial(M_S))$. In the second subsection we prove that $S\mapsto\pi_k^\Q(\aut_\partial(M_S))$ is a finitely generated $\FI$-module using certain rational models.

\subsection{The integral $\FI$-module structure on the homotopy automorphisms of iterated connected sums}\label{subsec:integral-thm-b} For the purposes of this section, we give an explicit construction of $M_n$ by removing the interiors of $n$ embedded little disks in $D^d$, which we fix as in Figure \ref{fig:stab1}, and gluing $n$ copies of $M\smallsetminus \mathring D^d$ along the new boundary components. Note that with this definition, we still have $M_1=M\smallsetminus\mathring D^d$. In Figure \ref{fig:stab2}, we see how we can embed $M_n$ into $M_{n+1}$ and by extending a boundary relative homotopy automorphism of $M_n$ by the identity thus defines a stabilization map
$$s_n:\aut_\partial(M_n)\to\aut_\partial(M_{n+1}).$$
In this section we will define a $\Sigma_n$-action on the homotopy groups of $\aut_\partial(M_n)$ and combining this with the stabilization induced by $s_n$, we obtain our $\FI$-module structure. Before we do this, we need to introduce some notation: 

\begin{figure}[h]
\centering
\begin{subfigure}[b]{0.5\textwidth}
  \centering
  \includegraphics[scale=0.62]{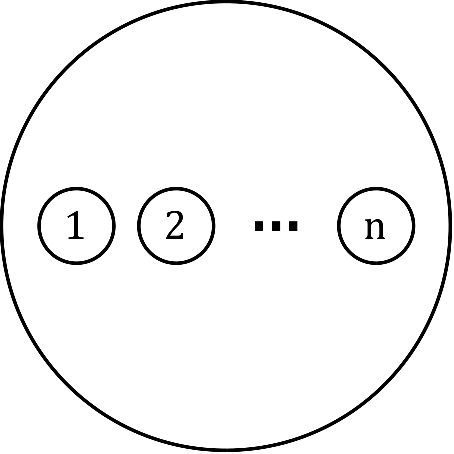}
  \caption{}
  \label{fig:stab1}
\end{subfigure}%
\begin{subfigure}[b]{0.5\textwidth}
  \centering
  \includegraphics[scale=0.7]{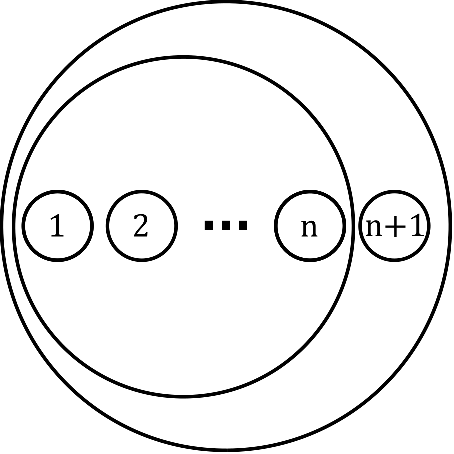}
  \caption{}
  \label{fig:stab2}
\end{subfigure}
\caption{We can define $M_n$ by gluing copies of $M_1$ into the disks in (A), while (B) illustrates how to define an embedding $M_n\hookrightarrow M_{n+1}$.}
\label{fig:stab}
\end{figure}

\begin{dfn}
For any pointed space $X$ and any finite set $S$, let us write $Q_{S,X}\colon\Sigma(S)\to \pi_0(\aut_*(\bigvee^S X))$ for the group homomorphism given by sending $\sigma\in \Sigma(S)$ to the homotopy class of the automorphism $\X_X(\sigma)\colon \bigvee^S X \to \bigvee^S X$ described in the beginning of Section \ref{sec:wedge}. \hfill$\diamond$\end{dfn}

\begin{rmk}
Since $\pi_0(\aut_*(\bigvee^S X))$ acts on $\pi_k(\aut_*(\bigvee^S X))$, we get an induced $\Sigma(S)$ action on $\pi_k(\aut_*(\bigvee^S X))$ by above. This action coincides with the $\Sigma(S)$-action coming from the $\FI$-module structure discussed in Section \ref{sec:wedge}.\hfill$\diamond$
\end{rmk}

\begin{dfn}
The deformation retraction $M_S \to \bigvee^S M_1$ induces an equivalence $\aut_*(M_S)\xrightarrow\sim \aut_*(\bigvee^S M_1)$.
Composing this map with the inclusion $\aut_\partial(M_S)\hookrightarrow\aut_*(M_S)$, yields a map
$$
u\colon\aut_\partial(M_S)\to \aut_*\left(\bigvee^S M_1\right).
$$
that induces a group homomorphism $\pi_0(u)\colon\pi_0(\aut_\partial(M_S))\to \pi_0(\aut_*(\bigvee^S M_1))$.\hfill$\diamond$
\end{dfn}

The first thing we will show to construct our $\FI$-module is the following:

\begin{prop}\label{prop:pi-noll-contains-sigma-n}
Assuming $d\ge 3$, there is a group homomorphism  $\varepsilon_n\colon\Sigma_n\to\pi_0(\aut_\partial(M_n))$ such that $Q_{n,M_1}$ factors as $Q_{n,M_1}=\pi_0(u)\circ \varepsilon_n$.
\end{prop}

\begin{rmk} Since the group $\pi_0(\aut_\partial(M_n))$ acts on the higher homotopy groups $\pi_k(\aut_\partial(M_n))$, this means that there is a $\Sigma_n$-action on the higher homotopy groups of $\aut_\partial(M_n)$, which is non-trivial whenever $\varepsilon_n$ is nontrivial. This action together with the stabilization maps will define our $\FI$-module structure. \end{rmk}

We will prove this in a number of steps, so let us first describe the idea: Writing $D:=D^d$, we consider the subgroup $G_n\subseteq \Diff_\partial(D)$ consisting of diffeomorphisms which fix the embedded little disks in $D$ from Figure \ref{fig:stab1}, up to permutation. There is then a group homomorphism $\pi:G_n\to\Sigma_n$, given by sending a diffeomorphism to the permutation it induces on the little disks. We also get a group homomorphism $G_n\to\Diff_\partial(M_n)$, given by constructing $M_n$ as above, and mapping $f\in G_n$ to the boundary relative diffeomorphism of $M_n$ which is given by $f$ outside the $n$ glued in copies of $M_1$, and on $\sqcup^n M_1$ is given by $\pi(f)$. We will construct a group homomorphism $\Sigma_n\hookrightarrow \pi_0(G_n)$, which postcomposed with the maps
$$\pi_0(G_n)\to\pi_0(\Diff_\partial(M_n))\to \pi_0(\aut_\partial(M_n))$$
is the map $\varepsilon_n$ described in Proposition \ref{prop:pi-noll-contains-sigma-n}. Let us now give the proof in more detail:

\begin{proof}
    Our choice of embedded disks in $D$ defines an element $e\in\Emb(\sqcup^n D, D)$. Let $\bar{e}$ denote its image in the quotient $\Emb(\sqcup^n D,D)/\Sigma_n$, where we take the quotient of the action permuting the embedded disks. Restricting to the image of $e$ defines a map $\Diff_\partial(D)\to \Emb(\sqcup^n D,D)$, which is a Serre fibration. The quotient map $\Emb(\sqcup^n D,D)\to\Emb(\sqcup^n D,D)/\Sigma_n$ is a covering map, so the composition \begin{equation}\label{eq:fibration-diff-emb}
        p:\Diff_\partial(D)\to\Emb(\sqcup^n D,D)/\Sigma_n
    \end{equation}
    is also a Serre fibration. The fiber over $\bar{e}$ consists of the diffeomorphisms which restricted to the image of $e$ is a permutation, i.e.\ $\mathrm{fib}_p(\bar{e})=G_n$. We thus get a connecting homomorphism
    $$\delta:\pi_1(\Emb(\sqcup^n D,D)/\Sigma_n)\to \pi_0(G_n)$$
    in the long exact sequence of homotopy groups. We will therefore first show that there is an injective group homomorphism $\Sigma_n\hookrightarrow \pi_1(\Emb(\sqcup^n D,D))$.

    Note that if we let $C_n(\mathring D)$ denote the ordered configuration space of $n$ points in $\mathring D$, there is a map $\hat{\rho}:\Emb(\sqcup^n D,D)\to C_n(\mathring D),$
    given by restricting to the center of each embedded disk. This map also has a section $\hat{s}$, given by sending a configuration to an embedding of $n$ little disks, centered at the respective points and with radii all equal to the minimum distance between the points and between the points and the boundary of $D$, divided by three. We also get an induced map
    \begin{equation}\label{eq:map-emb-conf}
        \rho:\Emb(\sqcup^n D,D)/\Sigma_n\to C_n(\mathring D)/\Sigma_n
    \end{equation}
    on orbits, which has a section $s$ defined in the corresponding way. Let us use the notation $U_n(\mathring D):=C_n(\mathring D)/\Sigma_n$ for brevity. Since we have assumed that $d\ge 3$, we have that $\pi_1(U_n(\mathring D))\cong\Sigma_n$ and thus we get a homomorphism $\pi_1(s):\Sigma_n\cong\pi_1(U_n(\mathring D))\to \pi_1(\Emb(\sqcup^n D,D)/\Sigma_n)$. Furthermore, note that since $s$ is a section $\pi_1(\rho)\circ\pi_1(s)$ is the identity on $\pi_1(U_n(\mathring D))$ and so  $\pi_1(s)$ is injective.

    By composing with connecting homomorphism in the long exact sequence associated to $p$, we thus get a homomorphism $\Sigma_n\to\pi_0(G_n)$. In order to understand this map better, we describe the connecting homomorphism $\delta$ in more detail. If $\gamma$ is a loop in $\Emb(\sqcup^n D,D)/\Sigma_n$, based at $\bar{e}$, representing an element of $\pi_1(\Emb(\sqcup^n D,D)/\Sigma_n)$, it lifts to a path $\tilde\gamma$ in $\Diff_\partial(D)$, starting at $\mathrm{id}_D$, since $p$ is a Serre fibration. The connecting homomorphism sends the class of $\gamma$ to the connected component of $G_n$ containing $\tilde\gamma(1)$. If we consider the restriction of $\delta$ to the image of inclusion $\pi_1(s)$ above, we see that a permutation $\sigma$ is sent to the isotopy class of some diffeomorphism in $G_n$ which restricted to the little disks is precisely $\sigma$. If we finally consider the composite map
$$\Sigma_n\rightarrow\pi_0(G_n)\to\pi_0(\Diff_\partial(M_n))\to\pi_0(\aut_\partial(M_n))\to\pi_0(\aut_*(M_n))\cong\pi_0(\aut_*(\vee^n M_1)),$$
it follows by the definition of the map $G_n\to\Diff_\partial(M_n)$ that this takes a permutation to the homotopy class of the homotopy automorphism of $\vee^n M_1$ given by permuting the wedge summands in the corresponding way. In other words, the composition is equal to $Q_{n,M_1}$, so we can simply define $\varepsilon_n$ as the composition of the first three maps. 
\end{proof}

\begin{rmk}\label{rmk:nontrivhomology}
If we assume that $M_1$ has non-trivial homology, then for any non-trivial permutation $\sigma$ we have that $\X_{M_1}(\sigma)\colon \bigvee^n M_1\to \bigvee^n M_1$ is not homotopic to the identity since it induces a non-trivial permutation of the reduced homology $\tilde H_*(\bigvee^n M_1) = \bigoplus_\mathbf{n} \tilde H_*(M_1)$, which is different from the identity map whenever $\tilde H_*(M_1)$ is non-trivial. If that is the case, the homomorphism $Q_{n,M_1}$ is injective, so it follows that $\varepsilon_n$ is injective as well and thus both $\pi_0(\aut_\partial(M_n))$ and $\pi_0(\aut_*(\bigvee^n M_1))$ contain a subgroup isomorphic to $\Sigma_n$.
\end{rmk}

\begin{cor} Under the assumptions of Remark \ref{rmk:nontrivhomology}, fix a subspace $A\subseteq \partial M_n$, possibly empty, such that $A\subset M_n$ is a cofibration. Then all of the groups $\pi_0(\aut_A(M_n))$, $\pi_0(\Diff_A(M_n))$ and  $\pi_0(\Homeo_A(M_n))$ contain a subgroup isomorphic to $\Sigma_n$. 
\end{cor}

\begin{proof}
Suppose that $A\neq\varnothing$ and let us first consider the case of homotopy automorphisms. Then the map $u:\aut_\partial(M_n)\to\aut_*\left(\bigvee^nM_1\right)$ factors as
$$\aut_\partial(M_n)\to\aut_A(M_n)\to\aut_*\left(\bigvee^nM_1\right),$$
proving this case. To get the cases with diffeomorphism or homeomorphisms, we consider the factorization
$$\Diff_\partial(M_n)\to\Diff_A(M_n)\to\Homeo_A(M_n)\to\aut_A(M_n)\to\aut_*(\vee^n M_1).$$

For the case where $A$ is empty, we instead postcompose with the map $\aut_*\left(\bigvee^n M_1\right)\to\aut\left(\bigvee^n M_1\right)$, and the resulting map factors as
$$\aut_\partial(M_n)\to\aut(M_n)\to\aut\left(\bigvee^nM_1\right).$$
The composition of $Q_{n,M_1}$ with the map induced on $\pi_0$ by the rightmost map above will still be injective and from this the case follows. To get the statement for diffeomorphisms and homeomorphisms, we instead use the factorization
$$\Diff_\partial(M_n)\to\Diff(M_n)\to\Homeo(M_n)\to\aut(M_n)\to\aut(\vee^n M_1).$$
\end{proof}

\begin{rmk}
    It was remarked by a referee that the 
 existence of the homomorphism $\Sigma_n\to\pi_0(\aut_\partial(M_n))$ is likely a consequence of a higher structure. More specifically, it is reasonable to expect that the space $\bigsqcup_{n\ge 1} B\aut_\partial(M_n)$ can be endowed with the structure of an $E_d$-algebra, i.e.\ an algebra over the little $d$-disks operad, in a similar way as for example the space $\bigsqcup_{n\ge 1} B\Diff_\partial(M_n)$. If this is the case, the $E_d$-algebra structure maps in particular give us a map
    $$E_d(n)/\Sigma_n\to B\aut_\partial(M_n),$$
    and since $E_d(n)/\Sigma_n\simeq U_n(\mathring D)$, taking fundamental groups gives us a map $\Sigma_n\to\pi_0(\aut_\partial(M_n))$, which should be precisely $\varepsilon_n$. We expect this to be true, but have elected to use a more hands on approach, since rigorously constructing the $E_d$-algebra structure is non-trivial and seems to require using methods from higher homotopy theory that go quite far beyond the scope of this paper. For comparison, what makes this easier in the case of diffeomorphisms is that we have a good model for $B\Diff_\partial(M_n)$ as a topological space, in terms of embeddings of $M_n$ into $\R^\infty$ (with certain boundary conditions), modulo the action of $\Diff_\partial(M_n)$. In contrast, it is not clear how to do a similar construction for homotopy automorphisms.
\end{rmk}

We have now defined the $\Sigma_n$-action on the homotopy groups of $\aut_\partial(M_n)$. Next we show that this action is compatible with the stabilization maps $s_n$.

\begin{prop}\label{prop:SigmaNinclusion}
There is a commutative diagram
\[\begin{tikzcd}
\Sigma_n\arrow[d,"\varepsilon_n"']\arrow[r]&\Sigma_{n+1}\arrow[d,"\varepsilon_{n+1}"]\\
\pi_0(\aut_\partial(M_n))\arrow[r,"\pi_0(s_n)"']&\pi_0(\aut_\partial(M_{n+1}))
\end{tikzcd}\]

where the upper horizontal map is the standard inclusion.
\end{prop}

\begin{proof}
We can construct stabilization maps $G_n\to G_{n+1}$, $\Emb(\sqcup^n D,D)/\Sigma_n\to\Emb(\sqcup^{n+1} D,D)/\Sigma_{n+1}$ and $C_n(\mathring D)/\Sigma_n\to C_{n+1}(\mathring D)/\Sigma_{n+1}$ in the same way as we defined $s_n:\aut_\partial(M_n)\to\aut_\partial(M_{n+1})$, using Figure \ref{fig:stab2}. This gives us a diagram
\[\begin{tikzcd}
\Sigma_{n}\arrow[r]\arrow[d,"\cong"]&\Sigma_{n+1}\arrow[d,"\cong"]\\
    \pi_1(U_n(\mathring D)\arrow[r]\arrow[d]&\pi_1(U_{n+1}(\mathring D))\arrow[d]\\
    \pi_1(\Emb(\sqcup^n D,D)/\Sigma_n)\arrow[r]\arrow[d]&\pi_1(\Emb(\sqcup^{n+1}D,D)/\Sigma_{n+1})\arrow[d]\\
    \pi_0(G_n)\arrow[r]\arrow[d]&\pi_0(G_{n+1})\arrow[d]\\
    \pi_0(\aut_\partial(M_n))\arrow[r]&\pi_0(\aut_\partial(M_{n+1})),
\end{tikzcd}\]
where the top horizontal arrow is the standard inclusion. The two upper squares, as well as the bottom square, are all commutative by the definition of the stabilization maps. The  second square from the bottom can be shown to be commutative simply by once again considering the definition of the connecting homomorphism in detail as above, but we can also reason as follows: We can define a map $\Diff_\partial(D)\to\Diff_\partial(D)$ in the same was as we defined the stabilization maps, using Figure \ref{fig:stab2} and extending by identity (note however that this map is homotopic to the identity), giving us a commutative diagram
\[\begin{tikzcd}
\Diff_\partial(D)\arrow[r]\arrow[d]&\Diff_\partial(D)\arrow[d]\\
    \Emb(\sqcup^{n}D,D)/\Sigma_{n}\arrow[r]& \Emb(\sqcup^{n+1}D,D)/\Sigma_{n+1},
\end{tikzcd}\]
which is a map of Serre fibrations. By functoriality, this induces a map between the long exact sequences of homotopy groups, in which the square we consider appears.
\end{proof}

\begin{cor}\label{cor:consistent}
For $k\geq 1$, the sequence $\{\pi_k(\aut_\partial(M_n)),\pi_k(s_n)\}$ is a consistent sequence of $\Z[\Sigma_n]$-modules. 
\end{cor}

\begin{proof}
Recall that $\pi_0(\aut_\partial(M_n))$ acts on $\pi_k(\aut_\partial(M_{n}))$ and thus, through the stabilization map 
$$\pi_0(s_n)\colon \pi_0(\aut_\partial(M_n))\to\pi_0(\aut_\partial(M_{n+1})),$$
$\pi_0(\aut_\partial(M_n))$ acts on $\pi_k(\aut_\partial(M_{n+1}))$ as well. By definition of the stabilization map, $\pi_k(s_n)$ is $\pi_0(\aut_\partial(M_n))$-equivariant.  

By considering $\pi_k(\aut_\partial(M_n))$ as a $\Z[\Sigma_n]$-module via the homomorphism $\varepsilon_n:\Sigma_n\to\pi_0(\aut_\partial(M_n))$, it follows from Proposition \ref{prop:SigmaNinclusion} and the equivariance discussed above that $\{\pi_k(\aut_\partial(M_n)),\pi_k(s_n)\}$ is a consistent sequence of $\Z[\Sigma_n]$-modules. 
\end{proof}

\begin{thm}\label{thm:thm-B-(a)}
For each $k\geq 1$, the $\FI$-module $S\mapsto \pi_k(\aut_*(M_S))\cong \pi_k\left(\aut_*\left({\bigvee}^S M_1\right)\right)$ lifts to an $\FI$-module
$$
S\mapsto \pi_k(\aut_\partial(M_S))
$$
where the standard inclusion $\mathbf{n}\hookrightarrow \mathbf{n+1}$ gives the induced stabilization map 
$$\pi_k(s)\colon\pi_k(\aut_\partial(M_n))\to \pi_k(\aut_\partial(M_{n+1})).$$
\end{thm}

\begin{proof}
We have shown in Corollary \ref{cor:consistent} that the homotopy groups $\{\pi_k(\aut_\partial(M_n))\}_{n\ge 1}$ form a consistent sequence of $\Z[\Sigma_n]$-modules and from the previous discussion, it is clear that the maps $\aut_\partial(M_n)\to\aut_*\left(\bigvee^nM_1\right)$ induce a map of consistent sequences to $\{\pi_k\left(\aut_*\left(\bigvee^nM_1\right)\right)\}_{n\ge 1}$, which we know comes from an $\FI$-module. Thus, it is sufficient to show that $\{\pi_k(\aut_\partial(M_n))\}_{n\ge 1}$ also comes from an $\FI$-module.

From Lemma \ref{lemma:consistentSeqFromFI-module}, we know that it suffices to show that if $\sigma\in\Sigma_{n+m}$ is such that $\sigma|_{\mathbf{n}}=\mathrm{id}$, it acts trivially on the image of the stabilization map $\pi_k(\aut_\partial(M_n))\to\pi_k(\aut_\partial(M_{n+m}))$.
Embedding $M_n$ in $M_{n+m}$ according to the composition of the embeddings $M_n\hookrightarrow\cdots\hookrightarrow M_{n+m}$ defined by Figure \ref{fig:stab2}, we may represent $\sigma$ by an automorphism $f_\sigma\in\aut_\partial(M_{n+m})$ which is supported completely on $M_m\subset M_{m+n}$ and is thus the identity on $M_n\subset M_{m+n}$. Any homotopy automorphism $g\in\mathrm{im}(s_{n+m-1}\cdots s_n\colon\aut_\partial(M_n)\to\aut_\partial(M_{m+n}))$ is supported on $M_n$, so we have that $f_\sigma gf^{-1}_\sigma = g$. From this we conclude that $\sigma$ on acts trivially on the image of the stabilization map $\pi_k(\aut_\partial(M_n))\to\pi_k(\aut_\partial(M_{n+m}))$.
\end{proof}

\subsection{Rational representation stability via algebraic models for relative homotopy automorphisms}\label{subsec:rat-stab-two}

We will study a certain dg Lie model for $B\aut_{\partial,\circ}(M_n)$  constructed in \cite{BM14} and use it in order to prove that the $\FI$-module $S\mapsto \pi_k^\Q(\aut_\partial(M_S))= \pi_k(\aut_\partial(M_S))\ot \Q$ is finitely generated. 

 We recall that a  quasi-free dg Lie algebra $(\L(V),d)$ is said to be minimal if $d(V)\subset [\L(V),\L(V)]$. If two minimal dg Lie algebras are quasi-isomorphic then they are isomorphic. Moreover, if $\L(V)$ is a minimal dg Lie algebra model for a nilpotent space $X$ of finite type, then one can show that $V$ is isomorphic to the desuspension of the reduced rational homology of $X$, which we will denote by $s^{-1}\tilde H_*(X;\Q)$.

In this subsection we fix a $d$-dimensional simply connected oriented closed manifold $M$ where $M_1 = M\smallsetminus\mathring D$ has a non-trivial rational homology. The intersection form on $H_*(M)$ induces a graded symmetric inner product of degree $d$ on the reduced homology $\tilde H_*(M_1)$. This in turn induces a graded anti-symmetric inner product of degree $d-2$ on $H=s^{-1}\tilde H^*(M_1)$.

\begin{dfn}
Let $H$ be a graded anti-symmetric inner product space of degree $d-2$ (e.g. $s^{-1}\tilde H_*(M_1)$) with a basis $\{\alpha_1,\dots, \alpha_m\}$. The dual basis $\{\alpha_1^\#,\dots,\alpha_m^\#\}$ is characterized by the following property
$$
\langle\alpha_i,\alpha_j^\#\rangle = \delta_{ij}.
$$
Let $\omega_H \in \L^2(H)$ be given by
$$
\omega_H = \frac12 \sum_{i=1}^m[\alpha_i^\#,\alpha_i].
$$
It turns out that $\omega_H$ is independent of choice of basis $\{\alpha_1,\dots,\alpha_m\}$; see \cite{BM14} for details. \hfill $\diamond$
\end{dfn}

\begin{rmk}
By same arguments as above the graded vector space $s^{-1}\tilde H_*(M_n)$ has also a structure of a graded anti-symmetric inner product space of degree $d-2$, which coincide with the one given by the direct sum $(s^{-1}\tilde H_*(M_1))^{\oplus n}$. \hfill $\diamond$
\end{rmk}
The next proposition is due to Stasheff \cite{stasheff83}, and discussed in \cite{BM14}.
\begin{prop}[\text{\cite[Theorem 2]{stasheff83}}, \text{\cite[Theorem 3.11]{BM14}}]
Let $M= M^d $ be a closed oriented $d$-dimensional manifold, let $M_1 = M\smallsetminus\mathring D$ and let $H = s^{-1}\tilde H_*(M_1)$. Then inclusion $S^{d-1}\cong\partial M_1\hookrightarrow M_1$ is modelled by a dg Lie algebra map
$$\iota\colon\L(x)\hookrightarrow \L(H)$$
$$\iota(x)= (-1)^d\omega_H,
$$
where $\L(H)$ and $\L(x)$ denote the minimal dgl models for $M_1$ and  $S^{d-1}$ respectively.\hfill $\diamond$
\end{prop}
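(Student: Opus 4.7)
The plan is to recover $\iota(x)$ from the Lie model of the closed manifold $M$, viewed as obtained from $M'$ by attaching a single top cell along $\partial M'\hookrightarrow M'$. First I would fix a free dg Lie algebra model $(\L(H),\delta)$ for $M'$, where $H = s^{-1}\tilde H_*(M')$; this is possible because $M'$ is simply connected with rational homology of finite type concentrated in degrees $\leq d-1$, so the underlying graded Lie algebra is determined by the homology and only the differential $\delta$ encodes the rational homotopy type. Since $\partial M' \simeq S^{d-1}$ has Lie model $\L(x)$ with $|x|=d-2$ and trivial differential, the inclusion is modelled by some $\iota\colon\L(x)\to\L(H)$, uniquely determined by the cycle $\iota(x)\in \L(H)_{d-2}$.

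Next I would use the fact that $M\simeq M'\cup_{\partial M'}D^d$ to form the pushout dg Lie algebra $(\L(H)\amalg_{\L(x)}\L(x,y),D)$ with $|y|=d-1$, $Dy=\iota(x)$, which is a Lie model for $M$. The point is that $M$ is a Poincar\'e duality space of formal dimension $d$, and this constraint forces $\iota(x)$ to be the unique element of $\L^2(H)_{d-2}$ that, via the induced Whitehead-product structure on $\pi_*(M)\otimes\Q$, realises the intersection pairing on $\tilde H_*(M')$. By the classical correspondence between the intersection form on $M$ and the dual diagonal class in $H\otimes H$, the canonical element realising the pairing with respect to a basis $\{\alpha_i\}$ and its dual $\{\alpha_i^\#\}$ is precisely $\omega_H=\tfrac12\sum_i[\alpha_i^\#,\alpha_i]$. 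I would verify this by computing the cohomology ring of the Sullivan dual of the mapping cone and checking that setting $\iota(x)=(-1)^d\omega_H$ reproduces the Poincar\'e duality algebra $H^*(M;\Q)$, with the sign $(-1)^d$ emerging from the Koszul signs in the desuspension $s^{-1}$ and the conventions relating the bracket on $\L(H)$ to the cup product on $H^*(M')$.

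The main obstacle is Step 2: matching the element $\omega_H$ with the geometric attaching class of $\partial M'$. Conceptually this is the statement that the boundary sphere of a handle decomposition of $M$ is, up to homotopy, a sum of iterated Whitehead brackets between homology classes dual to each other under the intersection form, but formalising it requires either a careful dual Sullivan computation against Poincar\'e duality, or an appeal to Stasheff's original argument in \cite{stasheff83} identifying the obstruction cocycle of the top cell attachment. The independence of $\omega_H$ from the choice of basis (noted in the definition) is what guarantees that this characterisation is coordinate-free. Tracking the sign $(-1)^d$ through desuspension, the Koszul rule in $[\alpha_i^\#,\alpha_i]$, and the orientation conventions on the pairing is the second delicate point, but once the leading identification $\iota(x)=\pm\omega_H$ is in place, the exact sign is pinned down by comparing with the standard model in the case $M=S^d$, where $H$ is one-dimensional and the computation is explicit.
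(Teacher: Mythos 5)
The first thing to say is that the paper does not prove this proposition at all: it is imported verbatim from Stasheff \cite{stasheff83} (see also the discussion in \cite{BM14}), so there is no in-paper argument to compare yours against. Your outline does follow the standard route behind Stasheff's theorem --- realise $M$ as $M'$ with a top cell attached, model the attachment by a cone $\L(H)\amalg_{\L(x)}\L(x,y)$ with $Dy=\iota(x)$, and use Poincar\'e duality of $M$ to identify $\iota(x)$ --- but as a proof it has a genuine gap at exactly the decisive step, and the gap is not repaired by the verification you propose.

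Concretely: you assert that Poincar\'e duality forces $\iota(x)$ to be ``the unique element of $\L^2(H)_{d-2}$'' realising the intersection pairing, and you propose to confirm this by checking that setting $\iota(x)=(-1)^d\omega_H$ reproduces the ring $H^*(M;\Q)$. That check only shows the choice $(-1)^d\omega_H$ is \emph{consistent} with the cup product; it does not show the actual attaching class equals it. A priori the attaching element of the top cell in a minimal Quillen model is only decomposable, i.e.\ lies in $\L^{\ge 2}(H)$: its quadratic part is dual to the component of the cup product landing in $H^d(M)$ (this is the ``dual diagonal'' argument you gesture at, and it does pin down the quadratic part as $\pm\omega_H$), but the bracket-length $\ge 3$ contributions encode higher operations landing in the top degree and are invisible to the cohomology ring. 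Controlling those higher terms --- showing the attaching class is represented by the purely quadratic cycle $\omega_H$ --- is precisely the content of Stasheff's theorem, and your stated fallback (``an appeal to Stasheff's original argument'') makes the proposal circular at its crux. A smaller but real defect: your plan to calibrate the sign $(-1)^d$ against $M=S^d$ cannot work, since there $M'=D^d$ is contractible, $H=0$ and $\omega_H=0$, so the statement is vacuous (and indeed the paper excludes this case by assuming $M'$ non-contractible); a meaningful calibration would use e.g.\ $M=S^n\times S^{d-n}$, where $\omega_H$ reduces to a single Whitehead bracket.
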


Given a fixed basis $\{\alpha_1,\dots,\alpha_m\}$ for $H=s^{-1}\tilde H_*(M_1)$ we get a basis for $s^{-1}\tilde H_*(M_n)\cong H^{\oplus n}$, which is of the form
$$
\{\alpha_{i}^j\ |\ 1\leq i \leq m, 1\leq j\leq n\}.
$$
We denote $\omega_{H^{\oplus n}} = \frac12\sum_{i,j}[(\alpha_{i}^j)^\#,\alpha_{i}^j]\in \L(H^{\oplus n})$ by $\omega_n$. We have that $\omega_n$ is  invariant under the $\Sigma_n$-action on $\L(H^{\oplus n})$ that permutes the summands of $H^{\oplus n}$.

Note that $\iota\colon \L(x)\to \L(H^{\oplus n})$ is not a cofibration. In order to model the inclusion $\partial M_n\subset M_n$ by a cofibration in the model category of dg Lie algebras we need a new model for $M_n$. 

\begin{lemma}\label{lemma:cofibration-model-for-incl}
Let $\L(H^{\oplus n},x,y)$ be the dg Lie algebra that contains $\L(H^{\oplus n})$ as a dg  Lie subalgebra and where $|x|=d-2$ and $|y|=d-1$ and where 
$$ dx = 0,\quad dy = x-(-1)^d\omega_n.$$
Then 
$$\hat\iota \colon \L(x)\to\L(H^{\oplus n},x,y)$$
$$\hat\iota(x)=x$$
is a cofibration that models the  inclusion of $\partial M_n \cong S^{d-1}$ into $M_n$. Moreover this model is a  relative minimal model in the sense of \cite{ES20}.\hfill $\diamond$
\end{lemma}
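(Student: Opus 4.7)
The plan is to verify the two assertions of the lemma separately: first that $\hat\iota$ is a cofibration, and then that it models the inclusion $\partial M_n \hookrightarrow M_n$. The first is essentially immediate: as a graded Lie algebra, $\L(H^{\oplus n}, x, y)$ is free on the generating vector space $H^{\oplus n}\oplus\langle x, y\rangle$, and $\hat\iota$ is induced by the inclusion of $\langle x\rangle$ into this generating set, so $\hat\iota$ is a free map of graded Lie algebras, which is precisely the defining property of a cofibration in the Quillen model structure on dg Lie algebras.

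For the second assertion, the strategy is to produce a quasi-isomorphism $p\colon \L(H^{\oplus n}, x, y) \to \L(H^{\oplus n})$ such that $p\circ\hat\iota$ coincides with the (non-cofibrant) Stasheff-type model $\iota\colon \L(x)\to\L(H^{\oplus n})$, $\iota(x) = (-1)^d \omega_n$, of the inclusion $\partial M_n\hookrightarrow M_n$, up to the automorphism $x\mapsto(-1)^d x$ of $\L(x)$ (which is harmless). I would define $p$ as the dg Lie algebra map with $p|_{H^{\oplus n}} = \id$, $p(x) = \omega_n$, and $p(y) = 0$; compatibility with the differential then reduces to $p(dy) = \omega_n - \omega_n = 0 = dp(y)$ and $p(dx) = 0 = d\omega_n = dp(x)$, where $d\omega_n = 0$ follows from the fact that $\omega_n = (-1)^d\iota(x)$ is the image of a cycle under the dg Lie map $\iota$.

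To show that $p$ is a quasi-isomorphism, I would perform the change of free generators $x' := x - \omega_n$. Since $\omega_n \in \L(H^{\oplus n})$, this substitution yields an isomorphism of graded Lie algebras $\L(H^{\oplus n}, x, y)\cong\L(H^{\oplus n}, x', y)$. In the new coordinates, $dx' = -d\omega_n = 0$ and $dy = x'$, so the algebra decomposes as the coproduct of dg Lie algebras
\[
\L(H^{\oplus n}) \amalg \L(x', y),
\]
where $\L(x', y)$ carries the internal differential $dx' = 0$, $dy = x'$. The second factor is the free graded Lie algebra on the acyclic two-dimensional complex $\langle x', y\rangle$; over $\Q$, the free graded Lie algebra functor preserves acyclicity, as one sees from the decomposition $\L(V) = \bigoplus_{k\ge 1}\mathrm{Lie}(k)\otimes_{\Sigma_k} V^{\otimes k}$ together with the Künneth formula and the exactness of $\Sigma_k$-coinvariants in characteristic zero. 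Hence $\L(x', y)\simeq 0$. Since both factors are cofibrant (being quasi-free), the coproduct is a homotopy coproduct and preserves weak equivalences in each variable, so $\L(H^{\oplus n}, x, y)\simeq\L(H^{\oplus n})$, with $p$ realizing the equivalence. Combined with $p\circ\hat\iota = \pm\iota$, this shows that $\hat\iota$ models the inclusion $\partial M_n\hookrightarrow M_n$.

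The main obstacle is justifying the quasi-isomorphism step cleanly: the acyclicity of $\L(x', y)$ is standard over $\Q$ but would fail integrally, so one must be careful to invoke the rational setting. If the model-categorical machinery is to be avoided, an alternative route is to build an explicit $\Q$-linear contracting chain homotopy on $\L(x', y)$ (using $y$ as a primitive for $x'$) and upgrade it to a chain deformation retraction of $\L(H^{\oplus n}, x', y)$ onto $\L(H^{\oplus n})$ on underlying chain complexes.
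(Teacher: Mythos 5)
Your proof is correct, and in fact more than the paper provides: the paper states this lemma without proof (the $\diamond$ marks the end of the unproved statement), treating it as the standard cofibrant replacement of Stasheff's model $\iota\colon\L(x)\to\L(H^{\oplus n})$, $\iota(x)=(-1)^d\omega_n$, essentially as in Berglund--Madsen. Your argument is exactly the standard one and all steps check out: $\hat\iota$ is free on $H^{\oplus n}\oplus\langle y\rangle$ relative to $\L(x)$, hence a cofibration in the sense the paper uses (``cofibration, i.e.\ a free map''); the change of generators $x'=x-\omega_n$ is legitimate because $\omega_n$ lies in the subalgebra generated by the remaining generators and is a cycle (as you note, $d\omega_n=0$ since $\iota$ is a chain map); and the resulting splitting $\L(H^{\oplus n})\amalg\L(x',y)$ with $\L(x',y)$ acyclic over $\Q$ gives the quasi-isomorphism $p$. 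The degrees $|x'|=d-2$, $|y|=d-1$ are positive since the hypotheses force $d\ge 3$, so the coproduct of positively graded quasi-free dg Lie algebras is indeed a homotopy coproduct, as the paper itself uses elsewhere. The only wrinkle is the sign: with $dy=x-\omega_n$ as stated, a chain map $p$ with $p(y)=0$ must send $x\mapsto\omega_n$ rather than $(-1)^d\omega_n$, so $p\circ\hat\iota$ agrees with $\iota$ only after the automorphism $x\mapsto(-1)^dx$ of $\L(x)$; your resolution of this is the right one (the identification $\partial M_n\cong S^{d-1}$ is only fixed up to such a degree-$\pm1$ self-map, and for the application to $\aut_\partial$ only the subalgebra $\L(x)$ matters, not its parametrization), though one could equally regard it as a sign convention that should have been absorbed into the definition of $dy$.
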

\begin{proof}
The dg Lie algebra map $\rho\colon \L(H^{\oplus n},x,y) \to \L(H^{\oplus n})$ where $\rho|_{H^{\oplus n}} = \id_{H^{\oplus n}}$, $\rho(x)= (-1)^d\omega_n$ and $\rho(y)=0$ is a quasi-isomorphism. Straightforward computation shows that $\rho \circ \hat\iota = \iota$, proving that $\hat\iota$ is a model for $\iota$ (which is a model for the inclusion of the boundary). Minimality is straightforward verification; see \cite[§ 3]{ES20}.
\end{proof}


We have by Proposition \ref{prop:models-for-haut} (b), that a dg Lie algebra model for $B\aut_{\partial,\circ}(M_n)$ is given by\break $\Der^+(\L(H^{\oplus n},x,y)\|\L(x))$. However, we will use another model thanks to the following result by Berglund and Madsen:

\begin{prop}[\text{\cite[Theorem 3.12]{BM14}}]\label{prop:omega-relative-der}  Let $\Der(\L(H^{\oplus n})\|\omega_n)$ denote the dg Lie algebra of derivations on $\L(H^{\oplus n})$ that vanish on $\omega_n$ and where the differential is given by $[d_{\L(H^{\oplus n})},-]$. Then there is an  equivalence of dg Lie algebras 
\begin{align*}
    \Der^+(\L(H^{\oplus n})\|\omega_n)&\to \Der^+(\L(H^{\oplus n},x,y)\|\L(x)),\\
    \theta&\mapsto \hat{\theta}
\end{align*}
where 
$\hat \theta|_{\L(H^{\oplus n})}=\theta$ and $\theta(x)=\theta(y)=0$.    \hfill$\diamond$\end{prop}

\begin{rmk}
It follows that $\Der^+(\L(H^{\oplus n})\|\omega_n)$ is a dg Lie algebra model for $B\aut_{\partial,\circ}(M_n)$ and the inclusion 
$\Der^+(\L(H^{\oplus n})\|\omega_n)\to \Der^+(\L(H^{\oplus n}))$ is a model for the map $B\aut_{\partial,\circ}(M_n)\to B\aut_{*,\circ}(M_n)$, induced by the inclusion $\aut_{\partial,\circ}(M_n)\hookrightarrow \aut_{*,\circ}(M_n)$.\hfill$\diamond$
\end{rmk}

\begin{dfn}\label{dfn:FI-module-rel-der} With the terminology established in § \ref{sec:fi}, we define a  dg $\FI$-Lie algebra $\Der(\L\mathcal{H}\|\omega_{\mathcal H})$ as follows: for $S\in\FI$, we let $\Der(\L\mathcal{H}\|\omega_{\mathcal{H}})(S):=\Der(\L \mathcal{H}(S)\|\omega_S)$ be the dg Lie algebra of derivations on $\L \mathcal{H}(S) = \L(H^{\oplus S})$ that vanish on $\omega_S$. For $i\colon S\hookrightarrow T$ in $\FI$, we get a map $$\Der(\L\mathcal{H}\|\omega_\mathcal{H})(i)\colon \Der(\L (H^{\oplus S})\|\omega_S)\hookrightarrow\Der(\L (H^{\oplus{T}})\|\omega_T),$$ defined as follows: suppose $x_\alpha\in\mathcal{H}(T)$ lies in the direct summand of $\mathcal{H}(T)$ corresponding to $\alpha\in T$ and let $D\in \Der(\L (H^{\oplus S})\|\omega_S)$. Then $\Der(\L\mathcal{H}\|\omega_{\mathcal H})(i)D$ is determined by
$$
    \Big(\Der(\L\mathcal{H}\|\omega_\mathcal{H})(i)D\Big)(x_\alpha)=\begin{cases} 0 &\text{ if }\alpha\in T\setminus i(S),\\
    \left(\L\mathcal{H}(i)\circ D\circ \mathcal{H}(i)^{-1}\right)(x_\alpha) &\text{ if } \alpha\in i(S).\end{cases}
$$\hfill $\diamond$
\end{dfn}
We conclude from having such a dg $\FI$-Lie algebra the following:

\begin{rmk}\label{rmk:FI-module-Hk(Der|omega)}
The above dg $\FI$-Lie algebra structure induces an $\FI$-module structure on the homology. For $k\geq 1$, we have that $H_k(\Der(\L(H^{\oplus S})\|\omega_{S})) \cong \pi_k^\Q(\aut_\partial(M_S))$, which gives an $\FI$-module structure on $\{\pi_k^\Q(\aut_\partial(M_S))\}_{S\in\FI}$. We will show that this $\FI$-module structure coincides with the one obtained by rationalizing the $\FI$-module structure on $\{\pi_k(\aut_\partial(M_S))\}_{S\in\FI}$ defined in § \ref{subsec:integral-thm-b}.\hfill $\diamond$
\end{rmk}

\begin{prop}\label{prop:rat-stab}
A dg Lie algebra model for the stabilization map $B\aut_{\partial,\circ}(M_n)\to B\aut_{\partial,\circ}(M_{n+1})$ is given by 
     $$
     \varphi_n\colon\Der^+(\L(H^{\oplus n})\|\omega_n)\to \Der^+(\L(H^{\oplus n+1})\|\omega_{n+1})
     $$
     where $\varphi_n(\theta)$ coincides with $\theta$ on the  first $n$ summands of  $H^{\oplus n+1}$ and vanishes on the last summand.
\end{prop}
\begin{proof}
The proof is omitted since it is very similar to the proof of Proposition \ref{prop:models-for-stab-sigma}.
\end{proof}

\begin{prop}\label{prop:rat-Sigma_n-act}
The $\Sigma_n$-action on $\pi_*^\Q(\aut_\partial(M_n))$ induced by the group homomorphism $\varepsilon_n\colon \Sigma_n\to \pi_0(\aut_\partial(M_n))$ is modelled by the $\Sigma_n$-action on $H_k(\Der(\L(H^{\oplus n}))\|\omega_n)$ induced by the $\FI$-module structure defined in Definition \ref{dfn:FI-module-rel-der}. 
\end{prop}

\begin{proof}
For every $\sigma\in\Sigma_n$, let $\eta_\sigma\in \aut_\partial(M_n))$ denote a representative for $\varepsilon_n(\sigma)\in \pi_0(\aut_\partial(M_n))$. For every $\sigma\in\Sigma_n$ we define a self-equivalence 
$$
\mathrm{Ad}_\sigma\colon \aut_\partial(M_n)\to \aut_\partial(M_n)
$$
$$
\mathrm{Ad}_\sigma(f) = \eta_\sigma\circ f \circ \eta_{\sigma^{-1}}.
$$
This induces a $\Sigma_n$-action on $\pi_k(\aut_\partial(M_n))$ given by  $\sigma.a = \pi_k(\mathrm{Ad}_\sigma)(a)$  which is precisely the $\Sigma_n$-action given by the $\FI$-module structure.

As we saw in Lemma \ref{lemma:cofibration-model-for-incl}, $\hat\iota\colon \L(x)\to \L(H^{\oplus n},x,y)$ is a minimal relative model for the inclusion $\partial M_n\hookrightarrow M_n$. 

By Lemma \ref{lemma:espicsaleh}  $\eta_\sigma$ is modelled by an $\hat\iota$-relative automorphism  $\zeta_\sigma\in\Aut_{\hat\iota}(\L(H^{\oplus n},x,y))$, and hence, by Corollary \ref{cor:F-is-equivariant}, the automorphism
$$
\alpha_{\zeta_\sigma}\colon \Der(\L(H^{\oplus n},x,y)\|\L(x))\to \Der(\L(H^{\oplus n},x,y)\|\L(x))
$$
$$
\alpha_{\zeta_\sigma}(\theta) = \zeta_\sigma\circ\theta\circ\zeta^{-1}_\sigma
$$
is a model for the delooping of $\mathrm{Ad}_{\sigma}$.
In particular $H_k(\alpha_{\zeta_\sigma})$ is a model for $\pi_k(\mathrm{Ad}_\sigma)$. Moreover, this defines a $\Sigma_n$-action on $H_k(\Der(\L(H^{\oplus n},x,y)\|\L(x)))$ given by $\sigma.b = H_k(\alpha_{\zeta_\sigma})(b)$ that models the $\Sigma_n$-action on $\pi_k^\Q(\aut_\partial(M_n))$ described above.

Since the isomorphism of Lemma \ref{lemma:espicsaleh} is not explicit, we do not know what $\zeta_\sigma$ is.  However, viewing $\zeta_\sigma$ as a non-relative automorphism that models pointed homotopy automorphisms, we know that it models the permutation of the summands of $\bigvee_{i=1}^nM_1$ corresponding to $\sigma\in \Sigma_n$. A model for this pointed map is given by $\psi_\sigma\colon \L(H^{\oplus n},x,y)\to\L(H^{\oplus n},x,y)$ where $\psi_\sigma(\alpha_i^j)= \alpha_i^{\sigma(j)}$, $\psi_\sigma(x)=x$  and  $\psi_\sigma(y) =y$. Since $\psi_\sigma$ and $\zeta_\sigma$ model the same pointed homotopy class of pointed maps they have to be  homotopic as dg Lie algebra maps and thus $\alpha_{\zeta_\sigma}$ and $\alpha_{\psi_\sigma}$ induce the same map on the homology of $\Der(\L(H^{\oplus n},x,y))$. In particular, for every cycle $\theta\in Z(\Der(\L(H^{\oplus n},x,y)))$, the difference $\alpha_{\zeta_\sigma}(\theta)-\alpha_{\psi_\sigma}(\theta)$ is a boundary $\partial \nu$ for some $\nu\in \Der(\L(H^{\oplus n},x,y))$.

Note that $\psi_\sigma$ is also $\hat\iota$-relative, but not necessarily $\hat\iota$-equivalent to $\zeta_\sigma$. Since $\zeta_\sigma$ and $\psi_\sigma$ are $\hat\iota$-relative, $\alpha_{\zeta_\sigma}$ and  $\alpha_{\psi_\sigma}$ define automorphisms of $\Der(\L(H^{\oplus n},x,y)\|\L(x))$. We will show that these automorphisms induce the same map on homology. Given a cycle $\theta\in Z(\Der(\L(H^{\oplus n},x,y)\|\L(x)))$, we have that $\theta$ is also a cycle in $\Der(\L(H^{\oplus n},x,y))$, and thus, by above there is some $\nu\in \Der(\L(H^{\oplus n},x,y))$ such that $\alpha_{\zeta_\sigma}(\theta)-\alpha_{\psi_\sigma}(\theta) = \partial \nu$. By this equality it follows that  that $\partial \nu(x)=0$.

Let $\widetilde\nu\in \Der(\L(H^{\oplus n},x,y)\|\L(x))$ be given by $\widetilde \nu|_{\mathrm{span}(H^{\oplus n},y)} = \nu|_{\mathrm{span}(H^{\oplus n},y)}$ and $\widetilde \nu(x)=0$. Now it is straightforward to see that 
$$
\alpha_{\zeta_\sigma}(\theta)-\alpha_{\psi_\sigma}(\theta) = \partial \nu = \partial \widetilde \nu.
$$
Hence $\alpha_{\zeta_\sigma}$ and $\alpha_{\psi_\sigma}$ induce the same morphisms on $H_*(\Der(\L(H^{\oplus n},x,y)|\L(x)))$. From this we conclude that the $\Sigma_n$-action on $H_k(\Der(\L(H^{\oplus n},x,y)|\L(x)))$ given by $\sigma.b = H_k(\alpha_{\psi_\sigma})(b)$ is a model for the $\Sigma_n$-action on $\pi_k^\Q(\aut_\partial(M_n))$.

Now consider the $\omega_n$-preserving automorphism of $\phi_\sigma\colon \L(H^{\oplus n})\to \L(H^{\oplus n})$ given by $\phi_\sigma = \psi_\sigma|_{\L(H^{\oplus n})}$. 
This yields an automorphism
$$
\alpha_{\phi_\sigma}\colon \Der(\L(H^{\oplus n})\|\omega_n)\to \Der(\L(H^{\oplus n})\|\omega_n)
$$
$$
\alpha_{\phi_\sigma}(\theta) = \phi_\sigma\circ\theta\circ\phi_\sigma^{-1}.
$$
The $\Sigma_n$-action on $\Der(\L(H^{\oplus n})\|\omega_n)$ given by $\sigma.b = \alpha_{\phi_\sigma}(b)$ is  the same $\Sigma_n$-action coming from the $\FI$-module structure described in  Definition \ref{dfn:FI-module-rel-der}.

The diagram
$$
\xymatrix{
\Der^+(\L(H^{\oplus n})\|\omega_n)\ar[d]_\sim\ar[r]^{\alpha_{\phi_\sigma}}&\Der^+(\L(H^{\oplus n})\|\omega_n)\ar[d]^\sim
\\
\Der^+(\L(H^{\oplus n},x,y)\|\L(x))\ar[r]_{\alpha_{\psi_\sigma}}&\Der^+(\L(H^{\oplus n},x,y)\|\L(x)),
}
$$
where the vertical maps are the quasi-isomorphisms of dg Lie algebras described in Propostion \ref{prop:omega-relative-der}, is commutative, which gives that the induced $\Sigma_n$-action on $H_k(\Der^+(\L(H^{\oplus n})\|\omega_n))$ is a model for the $\Sigma_n$-action on $H_k(\Der(\L(H^{\oplus n},x,y)|\L(x)))$ (which in turn, is a model for the $\Sigma_n$-action on $\pi_k^\Q(\aut_\partial(M_n))$).
\end{proof}

We recall that the Lie operad $\mathscr Lie$ is a cyclic operad, i.e.\ that the $\Sigma_n$-action on $\mathscr Lie(n)$ extends to a $\Sigma_{n+1}$-action. Let $\mathscr Lie_c(n+1)$ denote $\mathscr Lie(n)$ viewed as a $\Sigma_{n+1}$-representation. 

\begin{prop}[\text{\cite[Proposition 6.6]{BM14}}]\label{prop:der-cyclic-lie}
There is an isomorphism of $\FI$-modules
$$
\Der(\L\mathcal{H}\|\omega_{\mathcal H}) \cong s^{2-d}\mathbb S_{\mathscr Lie_c}(\mathcal H).
$$
\end{prop}
\begin{proof}
We will prove that this isomorphism is a special case of a more general isomorphism proved in \cite{BM14}. In loc.\ cit., the authors are considering the category of  graded anti-symmetric inner product
spaces of degree $2-d$ and with morphisms being linear maps of degree 0 that preserves the inner product. They call this category for $\mathrm{Sp}^{2-d}$. An $\mathrm{Sp}^{2-d}$-module is a functor from $\mathrm{Sp}^{2-d}$ to the category of graded vector spaces. By \cite[Proposition 6.6]{BM14}, $V\mapsto \Der(\L(V)\| \omega_V)$ defines a $\mathrm{Sp}^{2-d}$-module that is isomorphic to the  $\mathrm{Sp}^{2-d}$-module given by
$V\mapsto s^{2-d}\mathbb S_{\mathscr Lie_c}(V)$.

Any morphism $i\colon S\to T$ in $\FI$, we have that the associated map $\mathcal H(i)\colon \mathcal H(S) = H^{\oplus S}\to \mathcal H(T) = H^{\oplus T}$ is a morphism of $\mathrm{Sp}^{2-d}$-modules. Thus the isomorphism above follows.\end{proof}

\theoremtwo*

\begin{proof}
(a): This is Theorem \ref{thm:thm-B-(a)}.

(b): By the isomorphism in Proposition \ref{prop:der-cyclic-lie} we have that 
$$\Der(\L\mathcal H \| \omega_{\mathcal{H}})_k \cong \mathbb S_{\mathscr Lie_c}(\mathcal H)_{k+d-2}.$$
By Proposition \ref{propSchurFT} we have that 
$$
\weight(\Der(\L\mathcal H \|\omega_{\mathcal{H}})_k)\leq k+d-2 
$$
and
$$
\stabdeg(\Der(\L\mathcal H \| \omega_{\mathcal{H}})_k)\leq k+d-2
$$
The weight and the stability degree for the homology follows from Proposition \ref{prop:weightstab-homology}.

\end{proof}

\printbibliography

\noindent
\Addresses

\end{document}